\documentclass[12pt]{amsart}
\usepackage{amsfonts}
\vfuzz2pt

 \newtheorem{theorem}{Theorem}[section]
 \newtheorem{cor}[theorem]{Corollary}
 \newtheorem{conjecture}[theorem]{Conjecture}
 \newtheorem{lemma}[theorem]{Lemma}
 
 \theoremstyle{definition}
 
 \theoremstyle{remark}

\numberwithin{equation}{section}
\DeclareMathOperator{\supp}{supp}
\DeclareMathOperator{\ord}{ord}

\begin{document}
\title[On Subset Sums of Zero-sum Free set]
{On the structure of zero-sum free set with minimum subset sums in abelian groups}

\author[J.T. Peng]{Jiangtao Peng$^*$ }
\address{College of Science, Civil Aviation University of China,
Tianjin 300300, P.R. China}\email{jtpeng1982@aliyun.com}

\author[W.Z. Hui]{Wanzhen Hui}
\address{College of Science, Civil Aviation University of China,
Tianjin 300300, P.R. China} \email{huiwanzhen@163.com}

\subjclass[2000]{11B75, 11B70}%

\keywords{Abelian group; Subset sums; Zero-sum free.}

\begin{abstract}
Let $G$ be an additive abelian group and $S\subset G$ a subset. Let $\Sigma(S)$ denote the set of group elements which can be expressed as a sum of a nonempty subset of $S$. We say $S$ is zero-sum free if $0 \not\in \Sigma(S)$. It was conjectured by R.B.~Eggleton and P.~Erd\"{o}s in 1972 and proved by W.~Gao et. al. in 2008 that $|\Sigma(S)|\geq 19$ provided that $S$ is a zero-sum free subset of an abelian group $G$ with $|S|=6$. In this paper, we determined the structure of zero-sum free set $S$ where $|S|=6$ and $|\Sigma(S)|=19$.
\end{abstract}

\date{}

\maketitle

\section{Introduction and main results}
Our notation and terminology are consistent with \cite{GG2006} and \cite{Ge09a}. Let $\mathbb{N}$ and $\mathbb{R}$ be the set of positive integers and real numbers, and $\mathbb{N}_{0}=\mathbb{N}\cup\{0\}$.
For $a,b\in\mathbb{R}$ we set $[a,b]=\{x\in\mathbb{Z}|a\leq x\leq b\}$.

Let $G$ be an additive finite abelian group. Let $\ord (g)$ denote the order of $g \in G$.  Every sequence $S$ over $G$ can be written in the form
\begin{center}
$S=g_1 \cdot  \ldots \cdot g_\ell=\prod_{g \in G} g^{\mathsf v_g (S)}$,
\end{center}
where $\mathsf v_g(S) \in \mathbb{N}_0$ denote the {\it multiplicity} of $g$ in $S$. We call
\begin{itemize}
\item[] $\supp(S) = \{ g \in G \mid \mathsf v_g(S) > 0\}$ \quad  the {\it support } of $S$;
\item[] $\mathsf h(S)=\max \{ \mathsf v_g(S) \mid  g\in G\}$ \quad the {\it maximum of the  multiplicities} of $g$ in $S$;
\item[] $|S|= \ell = \sum_{g\in G}\mathsf v_g(S) \in \mathbb{N}_0$ \quad the {\it length} of $S$;
\item[] $\sigma(S)=\sum_{i=1}^{\ell}g_i = \sum_{g\in G}\mathsf v_g(S)g\in G $ \quad the {\it sum } of $S$.
\end{itemize}

A sequence $T$ is called a {\it subsequence} of $S$ and denoted by $T \mid S$ if $\mathsf v_g (T) \le \mathsf v_g (S)$ for all $g \in G$. Whenever $T \mid S$, let $ST^{-1}$ denote the subsequence with $T$ deleted from $S$. If $S_1, S_2$ are two disjoint subsequences of $S$, let $S_1S_2$ denote the subsequence of $S$ satisfying that $\mathsf v_g(S_1S_2)= \mathsf v_g(S_1) + \mathsf v_g(S_2)$ for all $g \in G$. Let
\begin{center}
$\Sigma(S)=\{ \sigma(T) \mid T \mbox{ is a subsequence of } S  \mbox{ with } 1 \le |T| \le |S|\}.$
\end{center}

The sequence $S$ is called {\it zero-sum }  if $\sigma(S) = 0 \in G$ and {\it zero-sum free}  if $0 \not\in \Sigma(S)$.
If $\sigma(S)=0$ and $\sigma(T)\ne 0$ for every  $T \mid S$  with $1 \le |T| < |S|$, then $S$ is called {\it minimal zero-sum}. If $\mathsf v_g (S) \le 1$ for all $g \in G$, we call $S$ a {\it subset} of $G$.

To characterize the structure of zero-sum free sequences in abelian groups is a subject of great interest in Zero-sum Theory (see \cite{GG1998, GH2002, SC2007, Yuan2007, ZYL2015}). In order to solve these kinds of problems, it becomes a key technology to determine the structure of zero-sum free subsets.

Let $S$ be a zero-sum free subset of $G$. The set $\Sigma(S)$ was first studied by R.B.~Eggleton and P.~Erd\"{o}s in 1972 \cite{EE1972}.  They showed that $|\Sigma(S)|\ge 2|S|-1$ for any subset $S$,  $|\Sigma(S)|\ge 2|S|$ if $|S| \ge 4$, and $|\Sigma(S)|\ge 13$ if $|S|=5$. Also They obtained a zero-sum free subset $S$ of a cyclic group $G$ such that $|\Sigma(S)| = \lfloor \frac{1}{2} |S|^2 \rfloor +1$. They conjectured that
\begin{conjecture}\label{conjecture1}
Let $G$ be an abelian group and $S$ be a zero-sum free subset of $G$. Then $|\Sigma(S)| \ge \lfloor \frac{1}{2} |S|^2 \rfloor +1$. Furthermore the lower bound of $|\Sigma(S)| $ can be achieved by a zero-sum free subset $S$ of a cyclic group.
\end{conjecture}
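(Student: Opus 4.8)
I would treat the two halves of the statement separately: an extremal \emph{construction} realizing the bound, and the \emph{lower bound} itself, which is the substantive part.

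For the construction (sharpness) the plan is to work in a cyclic group $\mathbb{Z}_n$ with $n = \lfloor |S|^2/2\rfloor + 2$ and to produce a zero-sum free $S$ whose subset sums exhaust every nonzero residue, so that $\Sigma(S) = \mathbb{Z}_n \setminus \{0\}$ has exactly $n - 1 = \lfloor |S|^2/2\rfloor + 1$ elements. Writing $k = |S|$ and $h = n/2$ (the unique element of order two), the natural candidate is
$$S = \{1, 2, \ldots, \lfloor k/2\rfloor\} \cup \{h, h+1, \ldots, h + \lceil k/2\rceil - 1\}.$$
The two blocks are arranged so that the ``small'' block fills an initial interval of subset sums, while adjoining an \emph{odd} number of ``large'' elements shifts the reachable interval by $h$ into the opposite half of $\mathbb{Z}_n$; since $2h = 0$, the two halves together tile $\mathbb{Z}_n \setminus \{0\}$. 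I would then verify by a direct (finite) computation that $\Sigma(S)$ omits only $0$ and that $0 \notin \Sigma(S)$, which settles the achievability clause.

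For the lower bound I would argue by induction on $k = |S|$, the cases $k \le 5$ being the results of Eggleton and Erd\H{o}s quoted above. Given a zero-sum free set $S$ with $|S| = k$, remove one element $g$ to obtain a zero-sum free $S' = Sg^{-1}$ with $|S'| = k-1$. Writing $A = \{0\} \cup \Sigma(S')$, one has
$$\Sigma(S) = \Sigma(S') \cup (g + A),$$
so by the inductive hypothesis it suffices to show that the translate $g + A$ contributes at least $\lfloor k^2/2\rfloor - \lfloor (k-1)^2/2\rfloor = 2\lfloor k/2\rfloor$ elements outside $\Sigma(S')$. Thus everything reduces to bounding the overlap $\Sigma(S') \cap (g + \Sigma(S'))$ from above.

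The main obstacle is precisely this overlap estimate. A single translate $g + A$ can coincide almost entirely with $A$ --- exactly when $\Sigma(S')$ is (close to) an arithmetic progression with common difference $g$, or a union of cosets of $\langle g \rangle$ --- in which case naive peeling of one element yields only a constant increment rather than the required $2\lfloor k/2\rfloor$. Overcoming this forces one to choose the removed element $g$ adaptively against the inductive structure of $\Sigma(S')$ and to invoke the additive-combinatorial theory of sets with small doubling (Kneser's theorem and Freiman/Vosper-type structure) in order to rule out the progression-like configurations that would defeat the count. This is the genuinely hard point, and the reason the full conjecture resists a uniform proof: only small values of $k$ (here up to $k = 6$, by Gao et al.) have been settled. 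The present paper's refinement --- classifying the zero-sum free sets that attain equality when $k = 6$ --- is exactly the kind of boundary-case structural information one expects to need before the inductive overlap control can be pushed to larger $k$.
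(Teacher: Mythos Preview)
The statement you are addressing is \emph{Conjecture}~\ref{conjecture1}, not a theorem, and the paper does not attempt a proof. More to the point, the paper records (just after Theorem~\ref{S=6}) that the conjecture is \emph{false}: Bhowmik, Halupczok and Schlage-Puchta produced a zero-sum free subset $S$ of a cyclic group with $|S|=7$ and $|\Sigma(S)|=24$, whereas $\lfloor 7^2/2\rfloor+1=25$. Yuan and Zeng then showed $|\Sigma(S)|\ge 24$ is the correct bound for $|S|=7$. So the lower bound you set out to prove by induction simply does not hold; any proof strategy for it must fail somewhere, and yours fails exactly where you locate the ``main obstacle'': the overlap $\Sigma(S')\cap(g+\Sigma(S'))$ genuinely can be too large, because for $k=7$ the desired increment $2\lfloor k/2\rfloor=6$ is not always available.

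Your closing paragraph says the conjecture ``resists a uniform proof'' and that only $k\le 6$ has been ``settled'', which suggests you believe it to be open rather than disproved. That is the essential gap. The construction half of your plan is reasonable in spirit (Eggleton and Erd\H{o}s themselves exhibited cyclic examples with $|\Sigma(S)|=\lfloor k^2/2\rfloor+1$), but since the matching lower bound is false there is nothing for it to be sharp against beyond small $k$.
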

In 1975, J.E.~Olson \cite{O} proved that $|\Sigma(S)| \ge \frac{1}{9} |S|^2.$ In 2008, W.~Gao et.~al. gave a positive answer to Conjecture \ref{conjecture1} for $|S|=6$.
\begin{theorem}\cite{GLPS}\label{S=6}
Let $G$ be an abelian group and $S$ be a zero-sum free subset of $G$ of length $|S|=6$. Then $|\Sigma(S)| \ge 19$.
\end{theorem}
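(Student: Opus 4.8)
The plan is to argue by induction on $|S|$, reducing everything to a count of how many new subset sums appear when a single element is adjoined. First I would replace $G$ by $\langle \supp(S)\rangle$, since both $\Sigma(S)$ and the zero-sum free hypothesis depend only on this subgroup. For $g \in \supp(S)$ write $T = Sg^{-1}$ and set $A_g = \Sigma(T)\cup\{0\}$. Splitting each nonempty subsequence of $S$ according to whether it contains $g$ yields the basic identity
\begin{equation*}
\Sigma(S) = \Sigma(T) \cup (A_g+g),
\end{equation*}
and zero-sum freeness forces $0\notin A_g+g$, since a relation $\sigma(U)+g=0$ with $U\mid T$ would give the zero-sum subsequence $Ug\mid S$. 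Consequently $|\Sigma(S)| = |\Sigma(T)| + N_g$, where $N_g = |(A_g+g)\setminus A_g| = |A_g| - |A_g\cap(A_g+g)|$ counts the genuinely new sums. Note $N_g\ge 1$ always: $N_g=0$ would make $A_g$ invariant under $+g$, hence $A_g\supseteq\langle g\rangle$, putting $-g\in\Sigma(T)$ and contradicting zero-sum freeness.

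By the Eggleton--Erd\"{o}s bound for length $5$ we have $|\Sigma(T)|\ge 13$ for \emph{every} choice of $g$, so $|\Sigma(S)|\ge 13 + N_g$. Hence it suffices to exhibit one element $g$ with $N_g\ge 6$ (or, with a little slack, one $g$ having $|\Sigma(T)|\ge 14$ and $N_g\ge 5$). I would therefore split into two cases. In the generic case some $g$ satisfies $N_g\ge 6$ and we are done at once. The difficult case is when $N_g\le 5$ for every $g\in\supp(S)$, which forces
\begin{equation*}
|A_g\cap(A_g+g)| \ge |A_g| - 5 \ge 9
\end{equation*}
for all $g$; that is, each $A_g$ is very nearly invariant under translation by $g$.

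I expect this near-periodicity to be the main obstacle, and I would dissolve it with a Kneser/Kemperman-type structure theorem. Viewing $\Sigma(S)\cup\{0\}$ as the sumset $\{0,g_1\}+\cdots+\{0,g_6\}$, where $A_g+\{0,g\}$ is exactly this sumset with the factor $\{0,g\}$ singled out, the smallness of every $N_g$ says the sumset is almost stabilized by each $\langle g_i\rangle$. Iterating the translations $A_g, A_g+g, A_g+2g,\dots$ and tracking the small symmetric differences should bound $\ord(g)$ from above and severely restrict $\langle\supp(S)\rangle$, pushing the problem toward sets built from elements of order $2$ and $3$ inside small cyclic groups or elementary abelian $2$-groups.

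Finally, in each of these few highly structured configurations I would compute $|\Sigma(S)|$ directly, or feed a sharpened growth estimate (including the tight length-$4$ bound $|\Sigma|\ge 9$) into a two-step descent, to confirm $|\Sigma(S)|\ge 19$, with equality reserved for the Eggleton--Erd\"{o}s extremal family. The real work, and the likely source of the length of the argument, lies in making the structural case split exhaustive and in keeping the counts tight enough that the boundary value $19$ is genuinely attained rather than overshot.
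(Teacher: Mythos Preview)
The paper does not itself prove this theorem; it is quoted from \cite{GLPS} and used as a black box. From the lemmas the paper imports from \cite{GLPS} (Lemmas \ref{4,5}, \ref{x4}, \ref{x5}) one can see that the method there is quite different from yours: one partitions the $63$ nonempty subsets of $S$ into classes $\mathcal{A}_1,\dots,\mathcal{A}_r$ by their sum (so $r=|\Sigma(S)|$), proves $|\mathcal{A}_j|\le 5$ together with an explicit finite catalogue of the shapes a class of size $4$ or $5$ can take, and then squeezes $r\ge 19$ out of the identity $63=\sum_j|\mathcal{A}_j|$ combined with a case analysis of those extremal classes. The closing paragraph of the proof of Theorem \ref{main result} in this paper is exactly that style of count.

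Your remove-one-element approach is a natural alternative, and the soft part is correct: the decomposition $\Sigma(S)=\Sigma(T)\cup(A_g+g)$, the identity $|\Sigma(S)|=|\Sigma(T)|+N_g$, and $N_g\ge 1$ are all fine. The genuine gap is the hard case. When every $N_g\le 5$ you invoke ``a Kneser/Kemperman-type structure theorem'' without naming one, and a direct Kneser bound on $\{0,g_1\}+\cdots+\{0,g_6\}$ is nowhere near $19$. The claim that near-invariance of $A_g$ under $+g$ ``should bound $\ord(g)$ from above'' is not substantiated: a set can be almost $g$-periodic with $\ord(g)$ arbitrary, and near-periodicity with respect to six different translations does not combine in any evident way into a usable structural constraint. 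This is precisely the point at which \cite{GLPS} drops growth arguments and turns to the explicit class catalogue, and your sketch gives no indication of how you would avoid doing the same. There is also a factual slip that would bite in any descent: you cite ``the tight length-$4$ bound $|\Sigma|\ge 9$'', but the sharp bound for $|S|=4$ is $8$, attained by $x\cdot 3x\cdot 4x\cdot 7x$ with $\ord(x)=9$ (Lemma \ref{4}); the bound $9$ only holds when $S$ contains an element of order $2$ (Lemma \ref{lemma 2}).
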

In 2009, G.~Bhowmik et.~al.\cite{BHS} gave an counterexample of Conjecture~\ref{conjecture1}. They showed that there is a zero-sum free subset $S$ of a cyclic group $G$ of length $|S|=7$ such that $|\Sigma(S)|=24$ (published in 2011). Later, P.~Yuan and X.~Zeng\cite{YZ2010} proved that  for any zero-sum free subset $S$ of an abelian group $G$, $|\Sigma(S)|\ge 24$ if $|S| =7$. In 2010, H.~Guan et.~al.\cite{GZY} described all the zero-sum free subsets $S$ of an abelian $G$ when $S=5$ and $|\Sigma(S)|=13$.

We focus on the following questions:
\begin{itemize}
\item[(1)] What is the lower bound of $|\Sigma(S)|$ for a zero-sum free subset $S$ of an abelian group $G$?
\item[(2)] What is the structure of $S$ when $|\Sigma(S)|$  achieved the  lower bound?
\item[(3)] Can the lower bound of $|\Sigma(S)|$  be achieved by a zero-sum free subset $S$ of a cyclic group?
\end{itemize}

The main aim of the present paper is to determine the structure of zero-sum free subset $S$ of length $|S|=6$ when $|\Sigma(S)|=19$. We now state our main results.

\begin{theorem}\label{main result}
Let $G$ be an abelian group and $S$ be a zero-sum free subset of $G$ of length $|S|=6$. Then $|\Sigma(S)|=19$ if and only if  there exists $x_1, x_2, x_3\in G$ such that $S$ is one of the following forms:
\begin{itemize}
\item[(i)] $S=x_1\cdot x_2 \cdot x_3 \cdot (x_1+x_3)\cdot (x_2+x_3)\cdot (x_1+x_2+x_3)$, where $\ord(x_1)=2$ and $2x_2\in \langle x_1 \rangle$;

\item[(ii)] $S=x_1\cdot x_2\cdot 2x_2\cdot 3x_2\cdot (x_1+x_2)\cdot (x_1+2x_2)$, where $\ord (x_1)=2$;

\item[(iii)] $S=(-2x_1)\cdot x_1 \cdot (3x_1)\cdot (4x_1)\cdot (5x_1)\cdot (6x_1)$, where $\ord(x_1)=20$;

\item[(iv)] $S=(-3x_1)\cdot x_1\cdot (4x_1)\cdot (5x_1)\cdot (9x_1)\cdot (12x_1)$, where $\ord(x_1)=20$;

\item[(v)] $S=x_1\cdot x_2\cdot (x_1+x_2)\cdot (x_1+2x_2)\cdot (2x_1+x_2)\cdot (4x_1+4x_2)$, where  $2x_1=2x_2, \ord(x_1)=\ord(x_2)=10$.
\end{itemize}
\end{theorem}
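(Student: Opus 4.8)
The statement is an ``if and only if,'' so the plan is to treat the two directions separately, with essentially all the work in the forward (structure) direction. For the ``if'' direction I would verify directly that each of the five families (i)--(v) yields a zero-sum free subset with $|\Sigma(S)|=19$. In each case the six elements lie in an explicitly described group---an extension of $\langle x_1\rangle$ with $\ord(x_1)=2$ in (i) and (ii), the cyclic group of order $20$ in (iii) and (iv), and the group $\langle x_1,x_2\rangle$ with $2x_1=2x_2$ and $\ord(x_1)=\ord(x_2)=10$ in (v)---so every subset sum has a normal form, and the $63$ nonempty subset sums can be enumerated (or grouped more economically using the coset/arithmetic-progression structure) to confirm that exactly $19$ distinct values occur and that $0$ is not among them. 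This is a finite computation in each family.

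For the ``only if'' direction I would first assume without loss of generality that $G=\langle S\rangle$, since $\Sigma(S)$ and all the structural conditions live inside $\langle S\rangle$. The engine is single-element deletion. For $g\mid S$ set $S'=Sg^{-1}$ and $A=\Sigma(S')$; then
$$\Sigma(S)=A\cup\bigl(g+(A\cup\{0\})\bigr),$$
and since $S'$ is zero-sum free, $0\notin A$, so $g\notin g+A$ and $|g+(A\cup\{0\})|=|A|+1$. Consequently
$$|\Sigma(S)|=2|A|+1-\bigl|A\cap\bigl(g+(A\cup\{0\})\bigr)\bigr|.$$
With $|\Sigma(S)|=19$ and the Eggleton--Erd\H{o}s/Gao bound $|A|\ge 13$ for the five-element subset $S'$, this forces $\bigl|A\cap(g+(A\cup\{0\}))\bigr|=2|A|-18\ge 8$; equivalently, adjoining $g$ creates exactly $19-|A|\le 6$ genuinely new subset sums. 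Thus translation by $g$ returns all but at most six elements of $A\cup\{0\}$ into $A$, so $\Sigma(S')$ is \emph{almost invariant} under each of the six translations, a very rigid constraint.

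Next I would exploit the $|S|=5$ classification of Guan et al.: whenever a deletion gives $|A|=13$, $S'$ is one of the explicitly known extremal five-subsets, and the near-invariance above pins down where the deleted $g$ can sit relative to $\Sigma(S')$. I would organize the argument by the minimum $m=\min_{g\mid S}|\Sigma(Sg^{-1})|$. When $m=13$ I start from a known extremal five-subset and solve for the sixth element; when $m\ge 14$ the new-element budget drops to $\le 5$ per deletion, giving an even more rigid overlap that is handled by the same counting. Combining the constraints from several choices of $g$ fixes the additive relations among $a_1,\dots,a_6$, which in turn identify the ambient group (an order-$2$ element in (i) and (ii); cyclicity of order $20$ in (iii) and (iv); the relation $2x_1=2x_2$ with order $10$ in (v)) together with a normalizing choice of generators $x_1,x_2,x_3$.

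The main obstacle is the size and bookkeeping of the case analysis rather than any single hard inequality: the overlap bound is soft and admits many near-extremal configurations, so the labor lies in showing that each one either violates zero-sum freeness, forces $|\Sigma(S)|>19$, or can be transported by a change of generators to exactly one of (i)--(v), all while checking that the five families are genuinely distinct and recording the normalizations so that no extremal $S$ is missed or counted twice. Pinning down the exact orders $2$, $10$, and $20$ is the delicate endpoint, since it requires tracking the precise linear relations forced among the $a_i$ and verifying that any smaller or larger order would move the count $|\Sigma(S)|$ off $19$.
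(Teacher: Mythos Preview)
Your strategy is genuinely different from the paper's. The paper does not delete an element and invoke the Guan--Zeng--Yuan classification of extremal five-subsets. Instead, after disposing of the order-$2$ case via a quotient argument (its Lemma~3.1), it works directly with the partition $\mathcal{A}_1,\dots,\mathcal{A}_r$ of the nonempty subsets of $S$ by their sum. The GLPS paper already classifies, up to permutation, every class $\mathcal{A}_i$ of size $4$ containing a singleton and every class of size $5$ (recorded here as Lemmas~2.7 and~2.8); each of those finitely many shapes is then run through a long but mechanical check (Lemmas~3.2, 3.3) that either forces $|\Sigma(S)|\ge 20$ or pins $S$ down to one of (iii)--(v). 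A final duality-and-counting step, using $63=\sum_j|\mathcal{A}_j|$ together with $|[x_i]|\le 3$, $|\mathcal{A}_j|\le 4$, and the pairing $\mathcal{A}_\nu\leftrightarrow\overline{\mathcal{A}_\nu}$, eliminates the residual case and gives $r\ge 21$.

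Your plan has a real gap at the $m\ge 14$ branch, and that branch is not peripheral: it is where \emph{all} of (iii)--(v) live. Each of those families sits in a group whose element orders divide $20$ (or $10$) and contains no element of order $2$; by Lemma~4.2 an extremal five-subset must either contain an element of order $2$ or be the specific set in $C_{14}$, so no deletion from a set of type (iii), (iv), or (v) can ever yield $|\Sigma(S')|=13$. Hence the Guan--Zeng--Yuan input contributes nothing toward discovering (iii)--(v), and your ``handled by the same counting'' for $m\ge 14$ has to manufacture, from six soft overlap inequalities alone, the exact coincidences $x_1=x_2+x_3=x_4+x_5=x_2+x_4+x_6$ (and their analogues) that single out those three families. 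That is precisely the step for which the paper imports the ready-made GLPS classification of large $\mathcal{A}_i$; absent a comparable structural lemma, your $m\ge 14$ case is a hope rather than an argument, and the proposal as written does not indicate how the specific orders $20$ and $10$ and the relation $2x_1=2x_2$ would emerge.
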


\begin{cor}\label{corollary}
Let $G$ be an finite abelian group of odd order. Let $S$ be a zero-sum free subset of $G$ of length $|S|=6$. Then $|\Sigma(S)|\geq 20$.
\end{cor}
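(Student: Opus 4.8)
The plan is to derive the corollary directly from the two theorems already in hand, combining the universal lower bound of Theorem~\ref{S=6} with the sharp structural classification of Theorem~\ref{main result}. By Theorem~\ref{S=6} we already know $|\Sigma(S)|\ge 19$ for every zero-sum free subset $S$ with $|S|=6$, so the only thing that needs to be established is that the extremal value $|\Sigma(S)|=19$ is unattainable once $|G|$ is odd. Thus I would argue by contradiction: assume $|\Sigma(S)|=19$ and show that this forces $G$ to have even order.

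The heart of the argument is a uniform inspection of the five admissible shapes (i)--(v) listed in Theorem~\ref{main result}. Under the assumption $|\Sigma(S)|=19$, the subset $S$ must coincide with one of these forms, and each carries an explicit order condition on the generating elements. Forms (i) and (ii) both require $\ord(x_1)=2$; forms (iii) and (iv) require $\ord(x_1)=20$; and form (v) requires $\ord(x_1)=\ord(x_2)=10$. In every single case $G$ therefore contains an element of even order, and by Lagrange's theorem that order divides $|G|$, so $2\mid |G|$. This contradicts the hypothesis that $|G|$ is odd, ruling out all five forms simultaneously.

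Having excluded equality, I would conclude that for odd $|G|$ one has the strict inequality $|\Sigma(S)|>19$, i.e.\ $|\Sigma(S)|\ge 20$, which is the assertion of Corollary~\ref{corollary}. There is no genuine obstacle in this deduction: the entire difficulty has already been absorbed into the proof of Theorem~\ref{main result}, and the corollary is essentially a one-line observation that each extremal configuration secretly hides an element of even order. The only care required is to verify that the listed order conditions really do cover all five cases with an even divisor, which the tabulation above confirms.
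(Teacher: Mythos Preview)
Your argument is correct and matches the paper's own proof essentially verbatim: the paper simply notes that when $|G|$ is odd none of the forms (i)--(v) can occur, and then invokes Theorem~\ref{S=6} and Theorem~\ref{main result} to conclude $|\Sigma(S)|\ge 20$. Your only addition is the explicit tabulation of the even-order element in each case and the appeal to Lagrange, which the paper leaves implicit.
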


The paper is organized as follows. In the next section, we provide some preliminary results. In Section 3, we prove  our main results. In the last section, we will give some further remarks.

Throughout this paper, we assume that $G$ is an abelian group.

\section{Preliminaries}

Let $S=g_1 \cdot  \ldots \cdot g_\ell$ be a sequence over $G$. Given any group homomorphism $\varphi: G\rightarrow G'$, where $G'$ is an abelian group,  then $\varphi(S)=\varphi(g_1)\cdot\ldots\cdot\varphi(g_l)$ is a sequence over $G'$.

\begin{lemma}\cite{GH2006}\label{1,t}
Let $S=S_1 \cdot  \ldots \cdot S_t$ be a zero-sum free sequence over $G$, where $S_1$, \ldots, $S_t$ are disjoint subsequences of $S$. Then
\begin{equation*}
|\Sigma(S)|\geq |\Sigma(S_1)| + \ldots + |\Sigma(S_t)|.
\end{equation*}
\end{lemma}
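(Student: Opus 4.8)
The plan is to exhibit inside $\Sigma(S)$ a family of $t$ pairwise disjoint subsets $A_1,\ldots,A_t$ with $|A_i|=|\Sigma(S_i)|$; summing their cardinalities then yields $|\Sigma(S)|\ge \sum_i|A_i|=\sum_i|\Sigma(S_i)|$. Each $A_i$ will be a translate of $\Sigma(S_i)$ by a fixed group element, so that $|A_i|=|\Sigma(S_i)|$ is automatic, and the entire content of the argument is the disjointness.

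For the construction, write $a_0=0$ and $a_i=\sigma(S_1\cdots S_i)$ for $1\le i\le t$, and set $A_i=a_{i-1}+\Sigma(S_i)$. First I would check $A_i\subseteq\Sigma(S)$: a typical element of $A_i$ has the form $a_{i-1}+\sigma(T_i)=\sigma\big((S_1\cdots S_{i-1})T_i\big)$ with $\emptyset\ne T_i\mid S_i$, and $(S_1\cdots S_{i-1})T_i$ is a nonempty subsequence of $S$, so its sum lies in $\Sigma(S)$. Since $x\mapsto a_{i-1}+x$ is a bijection of $G$, we get $|A_i|=|\Sigma(S_i)|$.

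The crux is disjointness, and this is exactly where the hypothesis that $S$ is zero-sum free enters. Suppose $i<j$ and some element lies in $A_i\cap A_j$, say $a_{i-1}+\sigma(T_i)=a_{j-1}+\sigma(T_j)$ with nonempty $T_i\mid S_i$ and $T_j\mid S_j$. Using $a_{j-1}-a_{i-1}=\sigma(S_i)+\cdots+\sigma(S_{j-1})$ and rewriting $\sigma(T_i)-\sigma(S_i)=-\sigma(S_iT_i^{-1})$, one rearranges the equation into
\[
\sigma\big((S_iT_i^{-1})\,S_{i+1}\cdots S_{j-1}\,T_j\big)=0.
\]
The subsequence on the left is nonempty precisely because $T_j$ is nonempty, so this is a nonempty zero-sum subsequence of $S$, contradicting zero-sum freeness. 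Hence the $A_i$ are pairwise disjoint and the bound follows.

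I expect the disjointness computation to be the only genuine obstacle; it is routine algebra, but it must be arranged so that the resulting zero-sum subsequence is visibly nonempty (this is guaranteed by $T_j\ne\emptyset$, and the degenerate cases $j=i+1$ and $T_i=S_i$ cause no trouble). An alternative, equally short route is induction on $t$: splitting $S=(S_1\cdots S_{t-1})\cdot S_t$ reduces everything to the case $t=2$, whose proof is exactly the $i=1$, $j=2$ instance of the disjointness argument above, comparing $\Sigma(S_1)$ with $\sigma(S_1)+\Sigma(S_2)$. Either way, the essential point is the same.
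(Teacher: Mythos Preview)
Your argument is correct. The paper itself does not supply a proof of this lemma; it is quoted from \cite{GH2006} as a known result. The construction you give---translating each $\Sigma(S_i)$ by the partial sum $\sigma(S_1\cdots S_{i-1})$ and then showing that any collision would produce a nonempty zero-sum subsequence of $S$---is exactly the standard proof found in the literature (indeed essentially that of Proposition~5.2.6 in \cite{GH2006}). Your bookkeeping is clean, and the point that $T_j\neq\emptyset$ guarantees nonemptiness of the alleged zero-sum subsequence is handled correctly.
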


\begin{lemma}\cite{EE1972,GH2006}\label{smallset}
Let $S$ be a zero-sum free subset of $G$. Then
\begin{itemize}
\item[(1)] $|\Sigma(S)|= 2|S|-1$ if $|S|=1$ or $2$;
\item[(2)] $|\Sigma(S)|\ge 5$ if $|S|=3$, furthermore  $|\Sigma(S)|\ge 6$ if $S$ contains no elements of order 2;
\item[(3)] $|\Sigma(S)|\ge 2|S|$ if $|S| \ge 4$;
\item[(4)] $|\Sigma(S)|\ge 13$ if $|S|=5$.
\end{itemize}
\end{lemma}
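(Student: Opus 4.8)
The plan is to treat the four parts separately, since (1) and (2) are elementary coincidence counts whereas (3) and (4) are the genuine Eggleton--Erd\H{o}s inequalities. Throughout I use that a zero-sum free subset $S$ contains no $0$ (otherwise $0\in\Sigma(S)$) and has pairwise distinct entries. Part (1) is immediate. If $|S|=1$, say $S=a$ with $a\neq 0$, then $\Sigma(S)=\{a\}$. If $|S|=2$, say $S=a\cdot b$ with $a\neq b$ both nonzero, the three candidate sums $a,b,a+b$ are pairwise distinct: $a=b$ is excluded since $S$ is a subset, while $a=a+b$ or $b=a+b$ would force $b=0$ or $a=0$. Hence $|\Sigma(S)|=3=2|S|-1$.

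For part (2) write $S=a\cdot b\cdot c$. The candidate sums are the three singletons $a,b,c$, the three pairsums $a+b,a+c,b+c$, and the total $t=a+b+c$. First, the pairsums are pairwise distinct (e.g. $a+b=a+c$ forces $b=c$), and the singletons are distinct. Next, $t$ differs from every other candidate: $t=a$ (or $b,c$) forces a two-element zero-sum, and $t=a+b$ (etc.) forces $c=0$, both contradicting zero-sum freeness. The only possible coincidences are therefore between a singleton and its complementary pairsum, namely $a=b+c$, $b=a+c$, $c=a+b$. Adding any two of these, e.g. $a=b+c$ and $b=a+c$, gives $2c=0$, while adding all three gives $a+b+c=0$; hence not all three can hold, so at most two coincidences occur, the six singleton/pairsum values contribute at least four distinct elements, and together with $t$ this yields $|\Sigma(S)|\geq 5$. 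If moreover $S$ contains no element of order $2$, then two coincidences would force one of $a,b,c$ to have order $2$ (as in $2c=0$ with $c\neq 0$), a contradiction; so at most one coincidence occurs, the singleton/pairsum values contribute at least five, and $|\Sigma(S)|\geq 6$.

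For parts (3) and (4) --- the Eggleton--Erd\H{o}s bounds --- I would follow the partial-sum arguments of \cite{EE1972} together with the additivity principle of Lemma \ref{1,t}. The foundation is the basic inequality $|\Sigma(S)|\geq 2|S|-1$, valid for every zero-sum free subset. To upgrade this to $|\Sigma(S)|\geq 2|S|$ when $|S|\geq 4$, I would examine the extremal case $|\Sigma(S)|=2|S|-1$: tightness forces $S$ into a rigid, arithmetic-progression-like shape of the form $g\cdot 2g\cdot\ldots$, and for $|S|\geq 4$ one checks directly that every such configuration either fails to be zero-sum free or already satisfies $|\Sigma(S)|\geq 2|S|$. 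The bound $|\Sigma(S)|\geq 13$ for $|S|=5$ is a further strengthening (note $13>2\cdot 5$) and requires ruling out, by the same structural analysis, the configurations with $|\Sigma(S)|\in\{10,11,12\}$; this is the content extracted from \cite{EE1972, GH2006}.

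The main obstacle is precisely this extremal analysis. The chain inequality and Lemma \ref{1,t} are each lossy --- for instance, splitting a five-element set as a $3+2$ decomposition yields only $|\Sigma(S)|\geq 5+3=8$ --- so the gains from $2|S|-1$ to $2|S|$, and from $10$ to $13$ when $|S|=5$, come entirely from identifying and discarding the finitely many near-extremal structures. Carrying out that casework carefully, as in the cited references, is where the real work lies; parts (1) and (2), by contrast, are settled completely by the coincidence arguments above.
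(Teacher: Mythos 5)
The paper does not prove this lemma at all: it is imported wholesale from \cite{EE1972,GH2006}, so there is no internal argument to compare against. Judged on its own terms, your proposal splits cleanly. Parts (1) and (2) are proved completely and correctly: your coincidence count --- singletons and pairsums can only collide in the three complementary patterns $a=b+c$, $b=a+c$, $c=a+b$; any two of these force an element of order $2$; all three together force $a+b+c=0$ --- is exactly the standard elementary argument from the cited sources, and it is airtight.

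Parts (3) and (4), however, are not proved; they are a plan, and the plan's pivotal claim is unsupported. You assert that tightness at $|\Sigma(S)|=2|S|-1$ forces an ``arithmetic-progression-like shape $g\cdot 2g\cdot\ldots$'', but you give no argument for this, and the evidence points the other way: your own part-(2) analysis shows that a zero-sum free triple with $|\Sigma(S)|=5$ must contain an element of order $2$ (e.g.\ $S=a\cdot(a+c)\cdot c$ with $2c=0$), so the extremal configurations are governed by order-$2$ elements rather than progressions --- compare Lemma \ref{lemma 2}, which is precisely an order-$2$ dichotomy, and Lemma \ref{4}, where the genuinely minimal $4$-set is $x\cdot(3x)\cdot(4x)\cdot(7x)$ with $\ord(x)=9$, not of your predicted shape. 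Moreover, $|\Sigma(S)|\ge 13$ for $|S|=5$ cannot be reached by classifying sets meeting $2|S|-1$: one must separately eliminate $|\Sigma(S)|\in\{10,11,12\}$, which in \cite{EE1972} is a substantial case analysis over chains of partial sums, not the extremal-rigidity route you sketch (and, as you correctly observe, Lemma \ref{1,t} is too lossy to help, a $3+2$ split giving only $|\Sigma(S)|\ge 8$). The honest summary: you have fully reproved (1) and (2); for (3) and (4) you are, like the paper itself, ultimately relying on the citations, which is legitimate as a citation but leaves your proposal with a genuine gap if it is meant to stand as a proof.
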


\begin{lemma}\cite[Theorem 3.2]{GLPS}\label{lemma 2}
Let $S$ be a zero-sum free subset of $G$ of length $|S|\in [4,7]$. If $S$ contains
some elements of order 2, then
\begin{equation*}
|\Sigma(S)|\geq \lfloor \frac{1}{2}|S|^2\rfloor +1.
\end{equation*}
\end{lemma}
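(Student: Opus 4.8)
The plan is to peel off an element of order $2$ and pass to the quotient by the subgroup it generates. First note that the case $|S|=5$ needs nothing new: $\lfloor\frac12|S|^2\rfloor+1=13$ there, which is exactly Lemma \ref{smallset}(4). So I may assume $|S|\in\{4,6,7\}$, where the targets are $9$, $19$, $25$.

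Fix $h\in S$ with $\ord(h)=2$, set $H=\langle h\rangle$, let $\varphi\colon G\to G/H$ be the quotient map, and write $S=h\cdot S'$ with $|S'|=|S|-1$. The core of the argument is the exact identity
\begin{equation*}
|\Sigma(S)|=2\,|\Sigma(\varphi(S'))|+1 .
\end{equation*}
I would prove it through $\Sigma_0(S):=\Sigma(S)\cup\{0\}$: since $2h=0$, the set $\Sigma_0(S)$ is stable under translation by $h$, hence is a union of full $H$-cosets, and the cosets it meets are precisely those lying in $\varphi(\Sigma_0(S))=\Sigma_0(\varphi(S'))$; counting two elements per coset and then removing the adjoined $0$'s on both sides yields the identity.

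Two structural facts turn this into a usable reduction. First, $\varphi(S')$ is zero-sum free over $G/H$: a subsequence of $S'$ whose image sums to $0$ in $G/H$ would sum to $0$ or $h$ in $G$, and both contradict that $S$ is zero-sum free. Second, because $S$ is a subset and each $H$-coset has only two elements, $\mathsf h(\varphi(S'))\le 2$. Hence it suffices to bound $|\Sigma(T)|$ from below for a zero-sum free sequence $T=\varphi(S')$ of length $|S|-1\in\{3,5,6\}$ with $\mathsf h(T)\le2$; concretely I need $|\Sigma(T)|\ge 4,\,9,\,12$, whereupon the identity gives $9,\,19,\,25$. When $T$ is squarefree (a genuine subset of $G/H$) this is immediate from Lemma \ref{smallset}: a $3$-subset gives $\ge5$, a $5$-subset gives $\ge13$, and a $6$-subset gives $\ge12$, the last being tight.

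The main obstacle is the remaining case $\mathsf h(T)=2$, where crude estimates are too weak — for the multiplicity pattern $a^2b^2c$ the elementary bound $|\Sigma(T)|\ge|T|+|\supp(T)|-1$ only delivers $7$ against the required $9$. Here I would stratify by the multiplicity pattern (for length $5$: $a^2b^2c$ and $a^2bcd$; for length $6$: $a^2b^2c^2$, $a^2b^2cd$, $a^2bcde$), in each pattern either split off a repeated element and recurse or run a partial-sum counting argument to force enough distinct subset sums, and verify that the extremal configurations are precisely the arithmetic progressions (for instance $a^2\cdot 2a$ realizes equality at length $3$). The iterated order-$2$ reduction handles any stratum in which $T$ still contains an involution of $G/H$, so the genuine work concentrates on the repeated patterns whose elements all have order greater than $2$.
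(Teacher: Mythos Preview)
This lemma is quoted from \cite{GLPS} and the present paper gives no proof of it; however, the proof of Lemma~\ref{lemma for order 2} explicitly appeals to ``the proof of Theorem~3.2 in \cite{GLPS}'' precisely to extract your two structural facts (that $\varphi(S')$ is zero-sum free with $\mathsf h(\varphi(S'))\le 2$), so your quotient-by-the-involution framework is exactly the intended one. Your exact identity $|\Sigma(S)|=2\,|\Sigma(\varphi(S'))|+1$ is correct---it sharpens the inequality of Lemma~\ref{phis2} specialised to $S_1=h$---and your $H$-invariance argument for $\Sigma_0(S)$ is clean.

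The proposal is incomplete exactly where you flag it: the $\mathsf h(\varphi(S'))=2$ strata. For $|S|=4$ the sole pattern $a^2b$ is trivial. For $|S|=6$ your outline can in fact be closed: for $a^2bcd$ the split $(abcd)(a)$ with Lemma~\ref{smallset}(3) gives $8+1=9$; for $a^2b^2c$ the repeats force $\ord(a),\ord(b)\ge 3$, so either $\ord(c)\ge 3$ and the split $(abc)(ab)$ with Lemma~\ref{smallset}(2) gives $6+3=9$, or $\ord(c)=2$ and your iterated reduction applies. The genuine gap is $|S|=7$, pattern $a^2b^2cd$: the best split via Lemma~\ref{1,t} yields only $8+3=11<12$, your iteration helps only when $c$ or $d$ is an involution, and ``run a partial-sum counting argument'' is not an argument. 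One route is to show $|\Sigma(abcd)|\ge 9$ here; in this paper that is Lemma~\ref{4}, but Lemma~\ref{4} invokes Lemma~\ref{lemma 2} for $|S|=4$, so you must first establish that case independently (which you do) before appealing to it, and then check that the unique extremal $4$-set $\{x,3x,4x,7x\}$ with $\ord(x)=9$ cannot sit inside a zero-sum free $a^2b^2cd$. As written, this case is not handled.
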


\begin{lemma}\cite[Lemma 2.3]{GLPS}\label{phis2}
Let $S=S_1S_2$ be a zero-sum free sequence over $G$, where $S_1$, $S_2$ are disjoint subsequences of $S$. Let $H=\langle \supp(S_1)\rangle$ and let $\varphi: G\rightarrow G/H$ denote the canonical epimorphism. Then we have
\begin{equation*}
|\Sigma(S)|\geq (1+|\Sigma(\varphi(S_2))|)|\Sigma(S_1)|+|\Sigma(\varphi(S_2))|.
\end{equation*}
\end{lemma}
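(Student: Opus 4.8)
The plan is to count the elements of $\Sigma(S)$ according to the coset of $H$ in which they lie, using the fact that $S_1$ controls movement \emph{within} each coset while the coset itself is recorded by the $S_2$-part. Write $k=|\Sigma(S_1)|$ and $m=|\Sigma(\varphi(S_2))|$, and set $\Sigma_0(S_1)=\Sigma(S_1)\cup\{0\}$, a set of $k+1$ elements lying in $H$. First I would record two basic facts. Since $S_1\mid S$ and $S$ is zero-sum free, $S_1$ is zero-sum free, so $\Sigma(S_1)$ consists of $k$ distinct \emph{nonzero} elements of $H$. And for any subsequence $T=T_1T_2$ of $S$ with $T_1\mid S_1$ and $T_2\mid S_2$, one has $\varphi(\sigma(T))=\varphi(\sigma(T_2))$, because $\sigma(T_1)\in H$. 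Hence the $H$-coset of every subset sum of $S$ lies in $\Sigma(\varphi(S_2))\cup\{0\}$ and is governed entirely by the $S_2$-part of $T$.

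The heart of the argument is a fibrewise count. For each nonzero $\bar c\in\Sigma(\varphi(S_2))$ I would fix a nonempty $T_2^{\bar c}\mid S_2$ with $\varphi(\sigma(T_2^{\bar c}))=\bar c$ and consider the translate $\sigma(T_2^{\bar c})+\Sigma_0(S_1)$: these are $k+1$ distinct elements of $\Sigma(S)$, each realized by the nonempty subsequence $T_1T_2^{\bar c}$ as $\sigma(T_1)$ ranges over $\Sigma_0(S_1)$, and they all sit in the single coset $\bar c$. In the zero coset, the $k$ elements of $\Sigma(S_1)$ themselves belong to $\Sigma(S)$. Since sums in distinct cosets are automatically distinct, summing over the realized cosets yields
\begin{equation*}
|\Sigma(S)|\ge k+(k+1)\bigl|\{\bar c\in\Sigma(\varphi(S_2)):\bar c\neq 0\}\bigr|.
\end{equation*}
When $0\notin\Sigma(\varphi(S_2))$ all $m$ coset-values are nonzero, so the right-hand side equals $k+(k+1)m=(1+m)k+m$, which is exactly the claimed bound.

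The one delicate point, and the step I expect to be the main obstacle, is the case $0\in\Sigma(\varphi(S_2))$, i.e.\ when some nonempty $T_2\mid S_2$ satisfies $\sigma(T_2)\in H$. Then the count above reaches only $k+(k+1)(m-1)$, and the missing contribution must be recovered inside the zero coset, where the global zero-sum-freeness of $S$ (not merely of $S_1$) must be invoked. Concretely, for such a $T_2$ one has $h:=\sigma(T_2)\in H\setminus\{0\}$, and zero-sum-freeness forces $\sigma(T_1)+h\neq 0$ for every $T_1\mid S_1$, hence $-h\notin\Sigma_0(S_1)$; thus the translate $h+\Sigma_0(S_1)$ avoids $0$ and supplies $k+1$ further subset sums of $S$ in the zero coset. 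The crux is then to control the overlap of $h+\Sigma_0(S_1)$ with $\Sigma(S_1)$ and to combine the various $H$-valued partial sums of $S_2$, and this is precisely the configuration in which the hypotheses are pushed hardest; in the applications one typically arranges that $\varphi(S_2)$ is itself zero-sum free, so that $0\notin\Sigma(\varphi(S_2))$ and the clean count of the second paragraph closes the estimate exactly. I would isolate the zero-coset analysis as a separate claim, proving it by examining the set of $H$-valued partial sums of $S_2$ together with $\Sigma_0(S_1)$, and if needed bounding $|\Sigma(S_1T_2)|$ via Lemma~\ref{1,t}.
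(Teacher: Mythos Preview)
The paper does not give its own proof of this lemma; it is quoted verbatim from \cite{GLPS}. Your fibrewise counting argument in the second paragraph is exactly the standard proof and is completely correct under the extra hypothesis that $\varphi(S_2)$ is zero-sum free, i.e.\ that $0\notin\Sigma(\varphi(S_2))$: each of the $m$ cosets in $\Sigma(\varphi(S_2))$ receives the $(k+1)$-element translate $\sigma(T_2^{\bar c})+\Sigma_0(S_1)$, the zero coset receives $\Sigma(S_1)$, and since different cosets are disjoint one gets $|\Sigma(S)|\ge (k+1)m+k$. This is precisely how the lemma is applied in the present paper (in the proof of Lemma~\ref{lemma for order 2} the authors first verify that $\varphi(S_2)$ is zero-sum free and only then invoke Lemma~\ref{phis2}).

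Your instinct that the case $0\in\Sigma(\varphi(S_2))$ is the ``delicate point'' is more than justified: as literally stated, without the hypothesis that $\varphi(S_2)$ is zero-sum free, the inequality is \emph{false}, so you should stop trying to repair that case. A concrete counterexample: take $G=\mathbb{Z}_9\times\mathbb{Z}_2$, $S_1=(1,0)\cdot(2,0)$ and $S_2=(0,1)\cdot(1,1)$. Then $H=\langle\supp(S_1)\rangle=\mathbb{Z}_9\times\{0\}$, so $G/H\cong\mathbb{Z}_2$ and $\varphi(S_2)=1\cdot 1$, giving $\Sigma(\varphi(S_2))=\{1,0\}$ and $m=2$; also $\Sigma(S_1)=\{(1,0),(2,0),(3,0)\}$, so $k=3$. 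One checks directly that $S=S_1S_2$ is zero-sum free and that
\[
\Sigma(S)=\{(1,0),(2,0),(3,0),(4,0)\}\cup\{(0,1),(1,1),(2,1),(3,1),(4,1)\},
\]
so $|\Sigma(S)|=9<(1+m)k+m=11$. Thus the bound fails here, and no amount of juggling $h+\Sigma_0(S_1)$ inside the zero coset will rescue it. The correct reading of the lemma (consistent with its use in this paper and with the original argument in \cite{GLPS}) includes the standing assumption that $\varphi(S_2)$ is zero-sum free; with that hypothesis your proof is complete as written.
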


\begin{lemma}\cite[Lemma 2.4]{GLPS}\label{lemma T}
\begin{itemize}
\item[(1)] Suppose $U, T, S$ are three sequences over $G$ and $S$ is zero-sum free. If $U\mid T$ and $TU^{-1}\mid S$, then $\sigma(U)\neq \sigma(T)$.

\item[(2)] Suppose $T_1,T_2$ are two subsets of $G$ with $|T_1|=|T_2|$ and $|T_1\cap T_2|=|T_1|-1$, then $\sigma(T_1)\neq \sigma(T_2)$.
\end{itemize}
\end{lemma}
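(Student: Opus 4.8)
The lemma consists of two independent assertions, and I would prove them separately, using the zero-sum free hypothesis only for part (1).

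My plan for part (1) is a direct contradiction argument built from the additivity of $\sigma$ over disjoint subsequences. First I would record the harmless convention that $U$ is a proper subsequence of $T$ (equivalently, that $TU^{-1}$ is a \emph{nonempty} subsequence of $S$), since if $U = T$ the deleted sequence is empty and the statement is vacuous. Then, supposing toward a contradiction that $\sigma(U) = \sigma(T)$, I would use $U \mid T$ to factor $T = U\,(TU^{-1})$ as a product of disjoint subsequences, so that $\sigma(T) = \sigma(U) + \sigma(TU^{-1})$ and hence $\sigma(TU^{-1}) = \sigma(T) - \sigma(U) = 0$. Since $TU^{-1}$ is a nonempty subsequence of $S$ by hypothesis, this places $0 \in \Sigma(S)$, contradicting that $S$ is zero-sum free; therefore $\sigma(U) \neq \sigma(T)$.

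For part (2) the zero-sum free hypothesis is not used at all, and the argument is pure set arithmetic. Setting $C = T_1 \cap T_2$, the two hypotheses $|T_1| = |T_2|$ and $|C| = |T_1| - 1$ force $|T_1 \setminus C| = |T_2 \setminus C| = 1$, say $T_1 = C \cup \{a\}$ and $T_2 = C \cup \{b\}$ with $a \notin C$ and $b \notin C$. I would then check that $a \neq b$: were $a = b$, this element would lie in both $T_1$ and $T_2$, hence in $C$, contradicting $a \notin C$. Consequently $\sigma(T_1) - \sigma(T_2) = (\sigma(C)+a) - (\sigma(C)+b) = a - b \neq 0$, giving $\sigma(T_1) \neq \sigma(T_2)$.

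Neither part presents a genuine obstacle; both conclusions are essentially immediate once the sums are decomposed correctly. The only steps demanding any care are the bookkeeping around the empty-sequence edge case in (1) and the verification in (2) that the two distinguishing elements are genuinely distinct, which is precisely what makes their difference nonzero.
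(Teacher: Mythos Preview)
Your argument for both parts is correct, and this is exactly the standard proof. Note that the paper does not supply its own proof of this lemma---it is quoted verbatim from \cite[Lemma~2.4]{GLPS} as a preliminary---so there is nothing to compare against here beyond confirming your reasoning is sound.

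One small wording issue in part~(1): you call the case $U = T$ ``vacuous,'' but in fact if $U = T$ then $\sigma(U) = \sigma(T)$ and the conclusion is literally \emph{false}, not vacuous. What you mean (and what the lemma tacitly assumes, as its applications in the paper confirm) is that $TU^{-1}$ is required to be nonempty, i.e.\ $U$ is a proper subsequence of $T$. With that understood, your decomposition $\sigma(T) = \sigma(U) + \sigma(TU^{-1})$ and the resulting contradiction $0 \in \Sigma(S)$ are exactly right. Part~(2) is clean as written; the key observation that $a \neq b$ (lest $a$ land in $C$) is precisely the point, and the difference $\sigma(T_1) - \sigma(T_2) = a - b \neq 0$ finishes it.
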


Let $S=x_1\cdot \ldots \cdot x_k$ be a zero-sum free subset of $G$ of length $|S|=k\in \mathbb{N}$, and let $\mathcal{A}$ be the set of all nonempty subsets of $S$. We partition $\mathcal{A}$
as
\begin{equation*}
\mathcal{A}=\mathcal{A}_1\uplus \ldots \uplus \mathcal{A}_r,
\end{equation*}
where two subsets $T,T'$ of $S$ are in the same class $\mathcal{A}_{\nu}$, for some $\nu \in [1,r]$, if $\sigma(T)=\sigma(T')$. Thus we have $r=|\Sigma(S)|$. For a subset $\mathcal{B}\subset \mathcal{A}$
we set
\begin{equation*}
\overline{\mathcal{B}}=\{ST^{-1}|T\in \mathcal{B}\}.
\end{equation*}
Then, for every $\nu \in [1,r]$, we clearly have $\overline{\mathcal{A}_{\nu}}\in \{\mathcal{A}_1,...,\mathcal{A}_r\}$, and $\overline{\mathcal{A}_{\nu}}$ will be called the dual class of $\mathcal{A}_{\nu}$. For a nontrivial subset $T$ of $S$ we denote by $[T]$ the class of $T$.

\begin{lemma}\cite[Lemma 3.3]{GLPS}\label{4,5}
Let $S=x_1\cdot \ldots \cdot x_6$ be a zero-sum free subset of $G$ with $\ord(x_i) \ge 3$ for $i=1,\ldots,6$. $[x_i]$ and $\mathcal{A}_j$ are defined as above.  Then $|[x_i]|\leq 4$ for every $i\in [1,6]$, and $|\mathcal{A}_i|\leq 5$ for every $i\in [1,r]$.
\end{lemma}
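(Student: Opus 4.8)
The plan is to distill two structural facts that \emph{every} class must satisfy and then run a count organized by the cardinalities of the members. Let $s$ denote the common sum of the members of the class $\mathcal{A}_\nu$ under consideration. From Lemma \ref{lemma T}(1), applied to a proper inclusion $U\subsetneq T\subseteq S$, no member of a class properly contains another, so each $\mathcal{A}_\nu$ is an \emph{antichain}; and Lemma \ref{lemma T}(2) says two equicardinal members cannot have symmetric difference of size $2$, hence any two members $T,T'$ of the same size satisfy $|T\triangle T'|\ge 4$. Writing $b_k$ for the number of members of size $k$, these two facts at once give $b_1\le 1$ (distinct singletons have distinct sums), $b_6\le 1$, $b_5\le 1$ (two $5$-subsets of a $6$-set differ by a symmetric difference of size $2$), and $b_2,b_4\le 3$ (equicardinal members of size $2$, respectively their size-$4$ complements, are pairwise disjoint pairs, and a $6$-set has at most three disjoint pairs). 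I also record the size-preserving duality $T\mapsto S\setminus T$, a bijection $\mathcal{A}_\nu\to\overline{\mathcal{A}_\nu}$ interchanging $b_k$ and $b_{6-k}$.

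To prove $|[x_i]|\le 4$ I would first use zero-sum-freeness: every member of $[x_i]$ other than $\{x_i\}$ must avoid $x_i$, since otherwise deleting $x_i$ would leave a nonempty zero-sum subset. Thus it suffices to bound by $3$ the number of subsets $B$ of the $5$-element set $S'=S\setminus\{x_i\}$ with $\sigma(B)=x_i$; let $c_k$ count those of size $k$, noting $c_1=0$. If $c_5=1$ then $S'\in[x_i]$ and the antichain property forces $c_2=c_3=c_4=0$. If $c_5=0$ but $c_4=1$, the size-$4$ member $Q=S'\setminus\{x_j\}$ forces every smaller member to contain $x_j$, and an elementary count then gives $c_2\le 1$ and $c_3\le 2$, with $c_3\le 1$ once $c_2=1$. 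The decisive case is $c_4=c_5=0$: here $c_2\le 2$, while $c_3\le 2$ because at most two triples of a $5$-set can pairwise meet in exactly one point; and the only way to reach $c_2=c_3=2$ uses two disjoint pairs together with two ``diagonal'' triples through the fifth element $x_e$, whence comparing sums yields $2x_e=0$, impossible since $\ord(x_e)\ge 3$. In every case $c_2+c_3+c_4+c_5\le 3$, so $|[x_i]|\le 4$.

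For $|\mathcal{A}_i|\le 5$ I would first clear away the non-generic classes. A class containing a singleton is some $[x_j]$, hence of size $\le 4$; a class containing $S$ equals $\{S\}$; and a class containing a $5$-subset has, via the complement, a singleton in its dual class, so by the previous paragraph together with the duality bijection it too has size $\le 4$. It remains to treat classes all of whose members have size $2,3$ or $4$, i.e. to show $b_2+b_3+b_4\le 5$, and by duality I may assume $b_2\ge b_4$. When $b_2=3$ the three disjoint pairs partition $S$ and every triple in the class is a transversal of this partition; two transversals meeting in one point force $2x=0$ for the common element, and two complementary transversals force $s=0$, so $b_3\le 1$ and the total is $\le 4$. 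When $b_2=0$ the class consists of triples pairwise meeting in at most one point, and since no Steiner triple system exists on $6$ points there are at most $4$ of them. The intermediate cases $b_2\in\{1,2\}$ (so $b_4\le b_2$) I would handle by listing, for each admissible family of present pairs and quadruples, exactly which triples are compatible with them under the antichain and symmetric-difference constraints, then discarding the largest surviving triple families by the same order-$\ge 3$ mechanism, obtaining $b_3\le 5-b_2-b_4$ in each subcase.

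The hard part will be precisely this last step. Unlike the clean extremes $b_2\in\{0,3\}$, the mixed configurations demand careful bookkeeping: each present pair or quadruple, through the antichain condition, forbids the triples containing it, while the mutual symmetric-difference condition pushes the remaining triples into sunflower- or triangle-type families, and the residual maximal families must be eliminated by deriving a relation $2x=0$ (or $\sigma(T)=0$) contradicting $\ord(x)\ge 3$ (respectively zero-sum-freeness). Arranging the count so that every unit of $b_2+b_4$ provably costs a unit of $b_3$ is where the order hypothesis is genuinely invoked and is the delicate core of the argument.
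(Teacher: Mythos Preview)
The paper does not give its own proof of this lemma; it is quoted verbatim from \cite[Lemma~3.3]{GLPS} and used as a black box. So there is nothing in the present paper to compare your argument against.

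That said, your plan is sound and the parts you carry out in detail are correct. The antichain property and the ``symmetric difference $\ge 4$'' constraint for equicardinal members follow from Lemma~\ref{lemma T} exactly as you say, and your proof of $|[x_i]|\le 4$ is complete: the reduction to subsets of the $5$-set $S\setminus\{x_i\}$, the case split on $(c_4,c_5)$, and the final $2x_e=0$ contradiction when $c_2=c_3=2$ all check out. For $|\mathcal{A}_i|\le 5$, your treatment of the extreme cases $b_2\in\{0,3\}$ is also correct; in particular, for $b_2=3$ one should note that the antichain condition forces $b_4=0$ (a $4$-set meeting each of the three pairs in at most one element has size $\le 3$), which you use implicitly, and for $b_2=0$ the non-existence of an $S(2,3,6)$ gives the packing bound $b_3\le 4$ as you argue.

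Where your write-up remains a sketch is precisely where you flag it: the mixed cases $b_2\in\{1,2\}$. These are not conceptually harder, but they do require the explicit enumeration you describe. For instance, when $b_2=2$ and $b_4=0$ the two disjoint pairs leave two free points $e,f$; every admissible triple is either a transversal of the pairs through $e$ or through $f$, or the set $\{p,e,f\}$, and the same $2x=0$ trick bounds each type, giving $b_3\le 3$. The subcases with $b_4\ge 1$ add one more layer of constraint (the quadruple forces each triple to meet its complement) and fall similarly. None of this is delicate, but it is genuine casework that must be written out before the proof is complete.
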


Let $P_n$ denote the symmetric group on $[1,n]$.

\begin{lemma}\cite[Lemma 7.2]{GLPS}\label{x4}
Let $S=x_1\cdot \ldots \cdot x_6$ be a zero-sum free subset of $G$ with $\ord(x_i) \ge 3$ for $i=1,\ldots,6$. $[x_i]$ is defined as above. If $|[x_i]|=4$ for some $i=[1,6]$, then there exists $\tau\in P_6$ such that $[x_i]$ is one of the following forms:
\begin{itemize}
\item[(b1)] $\{x_{\tau(1)}, x_{\tau(2)}\cdot x_{\tau(3)}\cdot x_{\tau(4)}\cdot x_{\tau(5)}, x_{\tau(2)}\cdot x_{\tau(6)},
x_{\tau(3)}\cdot x_{\tau(4)}\cdot x_{\tau(6)}\}$;

\item[(b2)] $\{x_{\tau(1)}, x_{\tau(2)}\cdot x_{\tau(3)}\cdot x_{\tau(4)}\cdot x_{\tau(5)}, x_{\tau(2)}\cdot x_{\tau(3)}\cdot x_{\tau(6)}, x_{\tau(4)}\cdot x_{\tau(5)}\cdot x_{\tau(6)}\}$;

\item[(b3)] $\{x_{\tau(1)}, x_{\tau(2)}\cdot x_{\tau(3)}, x_{\tau(4)}\cdot x_{\tau(5)}, x_{\tau(2)}\cdot x_{\tau(4)}\cdot x_{\tau(6)}\}$;

\item[(b4)] $\{x_{\tau(1)}, x_{\tau(2)}\cdot x_{\tau(3)}\cdot x_{\tau(4)}, x_{\tau(2)}\cdot x_{\tau(5)}\cdot x_{\tau(6)}, x_{\tau(3)}\cdot x_{\tau(5)}\}$.
\end{itemize}
\end{lemma}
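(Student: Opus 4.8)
The plan is to analyze directly the four subsets that make up $[x_i]$. Write $Y=Sx_i^{-1}$ for the set of the five elements of $S$ other than $x_i$. First I would record two elementary reductions. If $T\in[x_i]$ with $|T|\ge 2$ and $x_i\in T$, then $\sigma(Tx_i^{-1})=0$ with $Tx_i^{-1}$ nonempty, contradicting zero-sum freeness; hence every member of $[x_i]$ other than the singleton $\{x_i\}$ is a nonempty subset of $Y$ of sum $x_i$. Moreover no singleton $\{y\}\subseteq Y$ has sum $x_i$, since the elements of $S$ are distinct. Thus $[x_i]=\{\{x_i\},T_2,T_3,T_4\}$, where $T_2,T_3,T_4\subseteq Y$ are distinct, of size in $[2,5]$, and each of sum $x_i$. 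Let $a_k$ be the number of the $T_j$ of size $k$, so that $a_2+a_3+a_4+a_5=3$.

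Next I would bound each $a_k$. For $a_5$: if $\sigma(Y)=x_i$, then any proper nonempty $T\subsetneq Y$ with $\sigma(T)=x_i$ would give $\sigma(YT^{-1})=0$, impossible; so $a_5=1$ forces $a_2=a_3=a_4=0$ and $|[x_i]|=2$, a contradiction, whence $a_5=0$. For $a_4$: two distinct $4$-subsets of $Y$ meet in exactly $3=4-1$ elements, so by Lemma~\ref{lemma T}(2) they have distinct sums, giving $a_4\le 1$. For $a_2$: two sum-$x_i$ pairs sharing an element would force the two remaining elements to be equal, so the sum-$x_i$ pairs are pairwise disjoint and $a_2\le 2$. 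For $a_3$: two sum-$x_i$ triples meeting in $2$ elements again violate Lemma~\ref{lemma T}(2), so any two meet in at most one element, and a short counting argument on the five points of $Y$ shows three triples cannot pairwise meet in $\le 1$ element; hence $a_3\le 2$.

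With $a_5=0$, $a_4\le 1$, $a_3\le 2$, $a_2\le 2$ and total $3$, the only size-multisets for $(T_2,T_3,T_4)$ are $\{2,3,4\}$, $\{3,3,4\}$, $\{2,2,3\}$, $\{2,3,3\}$ and $\{2,2,4\}$. I would first eliminate $\{2,2,4\}$: with disjoint sum-$x_i$ pairs $\{y_1,y_2\},\{y_3,y_4\}$, the set $Y\setminus\{y_5\}$ has sum $2x_i\ne x_i$, while each $Y\setminus\{y_k\}$ with $k\le 4$ has sum $x_i$ only if some pair $\{y_a,y_5\}$ is zero-sum, which is impossible; thus no $4$-subset has sum $x_i$, contradicting $a_4=1$. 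For each of the four surviving multisets I would pin down the incidence pattern using the antichain property (no member of $[x_i]$ is contained in another, else their difference is zero-sum, which is Lemma~\ref{lemma T}(1)) together with the intersection bounds above. For instance, in the $\{2,3,4\}$ case the $4$-set is $Y\setminus\{y\}$ for some $y$; the pair and the triple cannot lie inside it, so both contain $y$, and they cannot be nested, so they meet only in $y$ — which is exactly form (b1) after relabelling the five elements of $Y$ as $x_{\tau(2)},\dots,x_{\tau(6)}$ and setting $\tau(1)=i$. The cases $\{3,3,4\}$, $\{2,2,3\}$ and $\{2,3,3\}$ are treated the same way and yield (b2), (b3) and (b4) respectively. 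The standing hypothesis $\ord(x_i)\ge 3$ keeps us in the setting of Lemma~\ref{4,5} and rules out order-$2$ coincidences.

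I expect the main obstacle to be this final step: carrying out the overlap analysis for all four size-multisets and verifying that the forced intersection patterns match precisely the listed forms (b1)--(b4), with a consistent choice of $\tau$. The cleanest non-routine point is the elimination of $\{2,2,4\}$, via the observation that any candidate $4$-subset either sums to $2x_i$ or manufactures a zero-sum pair.
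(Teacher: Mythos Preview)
The paper does not give its own proof of this lemma; it is quoted from \cite[Lemma~7.2]{GLPS} with no argument supplied here, so there is nothing in the present paper to compare against.

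Your argument is correct and is the natural direct one. The two reductions (every non-singleton member of $[x_i]$ is a subset of $Y=Sx_i^{-1}$ of size at least $2$; no member is contained in another, by Lemma~\ref{lemma T}(1)) together with the intersection bound from Lemma~\ref{lemma T}(2) give exactly the constraints $a_5=0$, $a_4\le 1$, $a_3\le 2$, $a_2\le 2$. Your elimination of the size-multiset $\{2,2,4\}$ is clean, and this is in fact the only place the hypothesis $\ord(x_i)\ge 3$ is genuinely used (to ensure $2x_i\ne x_i$). For the four surviving multisets the overlap analysis you sketch does force the listed patterns: e.g.\ in $\{2,3,3\}$ the two triples must meet in exactly one point $z$ and cover all of $Y$, and then $z$ cannot lie in the pair (else the pair sits inside a triple), so the pair picks one point from each triple away from $z$---exactly form (b4). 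The other three cases go through the same way and match (b1), (b2), (b3) as you claim.
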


\begin{lemma}\cite[Lemma 7.9]{GLPS}\label{x5}
Let $S=x_1\cdot \ldots \cdot x_6$ be a zero-sum free subset of $G$ with $\ord(x_i) \ge 3$ for $i=1,\ldots,6$. $[x_i]$ and $\mathcal{A}_j$ are defined as above. If $|\mathcal{A}_i|=5$, then there exists $\tau\in P_6$ such that $\mathcal{A}_i$ is one of the following forms:
\begin{itemize}
\item[(c1)] $\{x_{\tau(1)}\cdot x_{\tau(2)}, x_{\tau(3)}\cdot x_{\tau(4)}, x_{\tau(1)}\cdot x_{\tau(3)}\cdot x_{\tau(5)}\cdot x_{\tau(6)}, x_{\tau(2)}\cdot x_{\tau(4)}\cdot x_{\tau(5)}\cdot x_{\tau(6)},
x_{\tau(1)}\cdot x_{\tau(4)}\cdot x_{\tau(5)}\}$;

\item[(c2)] $\{x_{\tau(1)}\cdot x_{\tau(2)}, x_{\tau(3)}\cdot x_{\tau(4)}, x_{\tau(1)}\cdot x_{\tau(3)}\cdot x_{\tau(5)}\cdot x_{\tau(6)}, x_{\tau(2)}\cdot x_{\tau(5)}\cdot x_{\tau(6)},
x_{\tau(1)}\cdot x_{\tau(4)}\cdot x_{\tau(5)}\}$;

\item[(c3)] $\{x_{\tau(1)}\cdot x_{\tau(2)}, x_{\tau(3)}\cdot x_{\tau(4)}, x_{\tau(1)}\cdot x_{\tau(3)}\cdot x_{\tau(5)}\cdot x_{\tau(6)}, x_{\tau(1)}\cdot x_{\tau(4)}\cdot x_{\tau(5)},
x_{\tau(2)}\cdot x_{\tau(4)}\cdot x_{\tau(6)}\}$;

\item[(c4)] $\{x_{\tau(1)}\cdot x_{\tau(2)}, x_{\tau(3)}\cdot x_{\tau(4)}, x_{\tau(1)}\cdot x_{\tau(5)}\cdot x_{\tau(6)}, x_{\tau(2)}\cdot x_{\tau(3)}\cdot x_{\tau(5)},
x_{\tau(2)}\cdot x_{\tau(4)}\cdot x_{\tau(6)}\}$;

\item[(c5)] $\{x_{\tau(1)}\cdot x_{\tau(2)}, x_{\tau(1)}\cdot x_{\tau(3)}\cdot x_{\tau(5)}\cdot x_{\tau(6)}, x_{\tau(1)}\cdot x_{\tau(3)}\cdot x_{\tau(4)}, x_{\tau(2)}\cdot x_{\tau(3)}\cdot x_{\tau(6)},
x_{\tau(4)}\cdot x_{\tau(5)}\cdot x_{\tau(6)}\}$;

\item[(c6)] $\{x_{\tau(1)}\cdot x_{\tau(2)}, x_{\tau(1)}\cdot x_{\tau(3)}\cdot x_{\tau(5)}\cdot x_{\tau(6)}, x_{\tau(1)}\cdot x_{\tau(3)}\cdot x_{\tau(4)}, x_{\tau(2)}\cdot x_{\tau(3)}\cdot x_{\tau(6)},
x_{\tau(2)}\cdot x_{\tau(4)}\cdot x_{\tau(5)}\}$;

\item[(c7)] $\{x_{\tau(1)}\cdot x_{\tau(2)}, x_{\tau(1)}\cdot x_{\tau(3)}\cdot x_{\tau(4)}, x_{\tau(1)}\cdot x_{\tau(5)}\cdot x_{\tau(6)}, x_{\tau(2)}\cdot x_{\tau(3)}\cdot x_{\tau(5)},
x_{\tau(2)}\cdot x_{\tau(4)}\cdot x_{\tau(6)}\}$.
\end{itemize}
\end{lemma}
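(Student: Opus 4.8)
The plan is to identify $\mathcal{A}_i=\{T_1,\dots,T_5\}$ with a family of subsets of the index set $[1,6]$ (each subset of $S$ recorded by the indices it uses), pin down the possible cardinalities and pairwise intersections, and then run a finite case analysis. First I would record two structural facts valid for every class, both consequences of zero-sum freeness through Lemma~\ref{lemma T}. The \emph{antichain} property: no member of $\mathcal{A}_i$ is contained in another, since $U\subsetneq T$ with $\sigma(U)=\sigma(T)$ would make $T\setminus U$ a nonempty zero-sum subset. The \emph{separation} property: two members of equal cardinality must differ in at least two indices, for if $T=A\cup\{x_j\}$ and $T'=A\cup\{x_k\}$ then $\sigma(T)=\sigma(T')$ forces $x_j=x_k$. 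Next I would show every member has size in $\{2,3,4\}$: a singleton $\{x_j\}\in\mathcal{A}_i$ gives $|[x_j]|\ge 5$, contradicting Lemma~\ref{4,5}, while a $5$-subset passes under complementation to a singleton inside the dual class $\overline{\mathcal{A}_i}$, which is again a class of size $5$, yielding the same contradiction.

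Let $a_2,a_3,a_4$ count the members of each size. The separation property becomes clean packing bounds: two $2$-subsets must be disjoint, so $a_2\le 3$; dually two $4$-subsets have disjoint complements, so $a_4\le 3$; and two $3$-subsets meet in at most one index, i.e.\ they are edge-disjoint triangles in $K_6$, whence $a_3\le 4$ (five edge-disjoint triangles would use all $15$ edges, a triangle decomposition of $K_6$, impossible since every vertex has odd degree $5$). The antichain condition gives more: if $a_2=3$ the three disjoint pairs partition $[1,6]$, and then every $4$-subset contains both elements of some pair, forcing $a_4=0$ and leaving only $(a_2,a_3,a_4)=(3,2,0)$. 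I would kill $(3,2,0)$ directly: the three pairs give $x_1+x_2=x_3+x_4=x_5+x_6=a$, and the two triangles are transversals of these pairs; two transversals sharing exactly one index yield relations that combine to $2x_j=0$, contradicting $\ord(x_j)\ge 3$, while disjoint transversals force $a=2a$, i.e.\ $a=0$, contradicting zero-sum freeness. Hence $a_2\le 2$. The listed forms (c1)--(c7) are exactly the representatives with $a_2\ge a_4$, so after replacing $\mathcal{A}_i$ by $\overline{\mathcal{A}_i}$ when $a_2<a_4$ (permissible, as the dual is again a class of size $5$) we may assume $a_2\ge a_4$, leaving only the five distributions $(2,3,0),(2,2,1),(2,1,2),(1,4,0),(1,3,1)$.

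For each surviving distribution I would enumerate, up to the action of $P_6$, the antichain families realizing it. The placement of the $2$-subsets and of the complements of the $4$-subsets is essentially rigid because of forced disjointness, so the remaining freedom is only in how the $3$-subsets interlock with them under edge-disjointness and no-containment; a short combinatorial search in each distribution produces a small list of skeletons. Matching these against the normalized labelling gives the candidates (c1)--(c7), with two skeletons arising in each of $(2,2,1)$ and $(1,3,1)$ and one in each of the others. Finally, for each candidate I would impose the defining relations $\sigma(T_1)=\dots=\sigma(T_5)=a$, solve the resulting linear system over $G$ to express $x_3,\dots,x_6$ through two generators and $a$, and check that the configuration is not compelled to repeat an element, to contain an element of order $2$, or to contain a zero-sum subset; any skeleton so compelled is discarded, exactly as in the $(3,2,0)$ step.

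I expect the genuine difficulty to be twofold. First, keeping the skeleton enumeration of the third step both exhaustive and non-redundant: many relabellings coincide, and the edge-disjoint-triangle condition admits several inequivalent packings, so one must organize the search carefully to be sure no configuration is missed and none is double-counted. Second, the elimination step, where from the equal-sum relations one must extract the precise consequence ($2x_j=0$, $x_j=x_k$, or a vanishing subset sum) that rules out a spurious skeleton; these deductions depend delicately on which indices the triangles share and must be arranged so that no genuinely realizable configuration is excluded by accident. These two bookkeeping-heavy stages, rather than any single conceptual leap, are the real obstacle.
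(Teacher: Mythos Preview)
This lemma is quoted verbatim from \cite[Lemma~7.9]{GLPS} and is not proved in the present paper, so there is no proof here to compare your proposal against.  Your outline is nonetheless a sensible combinatorial classification, and the structural observations you record (the antichain and separation properties via Lemma~\ref{lemma T}, the restriction of member sizes to $\{2,3,4\}$ via Lemma~\ref{4,5}, the packing bounds $a_2\le 3$, $a_4\le 3$, $a_3\le 4$, and the elimination of the profile $(3,2,0)$) are all correct and are the natural first steps.

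There is, however, a genuine gap in your duality reduction.  You propose to replace $\mathcal{A}_i$ by $\overline{\mathcal{A}_i}$ whenever $a_2<a_4$ and then classify only the profiles with $a_2\ge a_4$.  This proves the lemma \emph{as stated} only if the list (c1)--(c7) is closed under taking duals, and it is not: the dual of (c4), namely
\[
\{x_{3}x_{4}x_{5}x_{6},\ x_{1}x_{2}x_{5}x_{6},\ x_{2}x_{3}x_{4},\ x_{1}x_{4}x_{6},\ x_{1}x_{3}x_{5}\},
\]
has profile $(a_2,a_3,a_4)=(0,3,2)$ and so cannot match any of (c1)--(c7) under any $\tau\in P_6$; the duals of (c2), (c3) and (c7) likewise have profiles $(1,2,2)$ and $(0,4,1)$ absent from the list.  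Consequently your argument establishes only that \emph{at least one of $\mathcal{A}_i$ and $\overline{\mathcal{A}_i}$} lies among (c1)--(c7).  That weaker statement is in fact all the present paper ever uses --- in the proof of Theorem~\ref{main result} one only needs \emph{some} class $\mathcal{A}_j$ of the required shape, and Lemmas~\ref{20} and~\ref{19} are phrased accordingly --- but it is strictly weaker than the lemma as written.  To prove the lemma literally you must either drop the duality shortcut and also run the enumeration for the profiles $(1,2,2)$, $(0,3,2)$, $(0,4,1)$, or else verify afterwards, form by form, that each dual is again on the list (which, as the example above shows, fails).
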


The following lemma gives the structure of a zero-sum free subset $T$ of $G$, where $|T|=4$ and $|\Sigma(T)|$  achieves the  lower bound.
\begin{lemma}\label{4}
Let $T$ be a zero-sum free subset of $G$ of length $|T|=4$. Then $|\Sigma(T)|=8$ if and only if there exists $x \in G$ such that $T=x\cdot(3x)\cdot(4x)\cdot(7x)$ and $\ord(x)=9$.
\end{lemma}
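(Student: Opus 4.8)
The statement has two directions, and I would treat them separately. The \emph{if} direction is a finite computation that I would do first. Writing the elements of $T=x\cdot(3x)\cdot(4x)\cdot(7x)$ as the integer coefficients $1,3,4,7$ and reducing all subset sums modulo $\ord(x)=9$, I would list the $15$ sums: the singletons give $1,3,4,7$; the six pairs give $4,5,8,7,1,2$; the four triples give $8,2,3,5$; and the whole set gives $6$. Collecting residues, the set of attained sums is exactly $\{1,2,3,4,5,6,7,8\}$, which has $8$ elements and avoids $0$. Hence $T$ is zero-sum free and $|\Sigma(T)|=8$, as claimed.

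For the \emph{only if} direction, assume $T=\{x_1,x_2,x_3,x_4\}$ is zero-sum free with $|\Sigma(T)|=8$; by Lemma \ref{smallset}(3) we already have $|\Sigma(T)|\ge 2|T|=8$, so we are at the extremal value. The first step is to eliminate $2$-torsion: if some $x_i$ had order $2$, Lemma \ref{lemma 2} would force $|\Sigma(T)|\ge\lfloor 16/2\rfloor+1=9$, a contradiction, so every $x_i$ has order $\ge 3$. Next I would set up the class structure of the partition $\mathcal{A}=\mathcal{A}_1\uplus\cdots\uplus\mathcal{A}_r$. Since $T$ is zero-sum free, $\sigma(U')-\sigma(U)=\sigma(U'\setminus U)\ne 0$ whenever $U\subsetneq U'$, so no class contains two nested subsets; in particular $[T]=\{T\}$ is a singleton class, and two subsets of a common size coincide only if they are disjoint. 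This yields the within-size bounds: the four singletons lie in four distinct classes, the four triples lie in four distinct classes (by Lemma \ref{lemma T}(2)), and two distinct pairs coincide only if they are complementary, so a size-$2$ class has at most two members.

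Removing $[T]$, the remaining $14$ proper nonempty subsets occupy exactly $7$ classes. Because a singleton $\{x_i\}$ can merge only with its complementary triple (any other triple nests it), the number of distinct classes meeting singletons and triples equals $8-q$, where $q=\#\{i: 2x_i=\sigma(T)\}$; since this number is at most $7$ we get $q\ge 1$, i.e. after relabeling $x_1=x_2+x_3+x_4$. A short accounting then pins down the whole structure: a class holding a singleton and its complementary triple cannot also hold a pair (the pair would nest in the triple), so that merged class has size exactly $2$, and distributing the six pairs among the remaining classes of size $\le 2$ forces every one of the seven classes to have size exactly $2$. Thus the equality case produces precisely seven coincidence relations among $x_1,x_2,x_3,x_4$.

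The final and hardest step is to solve this rigid system. Each size-$2$ class is a pair of subsets of type (singleton, triple), (singleton, pair), (pair, pair), or (pair, triple) whose equal sums give one linear relation in the $x_i$; together with $x_1=x_2+x_3+x_4$ these relations over-determine $T$. I would carry out the bookkeeping of which coincidence patterns are mutually compatible, reducing modulo relabelings in $P_4$ and group automorphisms, show that they all collapse to a single admissible pattern, and then solve: the relations force $H=\langle x_1,x_2,x_3,x_4\rangle$ to be cyclic and, on writing $x_i=a_ix$ for a generator $x$, pin the coefficients to $\{1,3,4,7\}$ modulo $9$ with $\ord(x)=9$. I expect the main obstacle to be organizing this casework cleanly enough to confirm that no other coefficient pattern and no other group order survives, after which the \emph{if} computation certifies that the surviving case is genuinely extremal.
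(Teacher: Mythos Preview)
Your class-partition framework is sound and genuinely different from the paper's argument. The conclusion that all seven classes other than $[T]$ have size exactly two is correct, though the phrase ``remaining classes of size $\le 2$'' needs one more remark: an unmerged singleton class $[x_i]$ can contain at most one pair, since any pair avoiding $x_i$ has its complement containing $x_i$, which would nest; the dual observation holds for unmerged triple classes, and then $14 = 7\cdot 2$ forces equality. However, the casework you defer at the end is essentially the whole difficulty, and the paper bypasses it via a different reduction. Rather than analyzing the global class structure, the paper first proves that some three-element subset $T'\subset T$ has $|\Sigma(T')| = 7$: if all four triples had $|\Sigma| = 6$ (the minimum in the absence of order-$2$ elements, by Lemma~\ref{smallset}), then in each triple one element equals the sum of the other two, and chaining these relations across overlapping triples forces a zero subset-sum. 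With $|\Sigma(x_1\cdot x_2\cdot x_3)| = 7$ in hand, the set $M = \{x_4\} \cup \bigl(x_4 + \Sigma(x_1\cdot x_2\cdot x_3)\bigr)$ already has eight elements, so $M = \Sigma(T)$; the single membership $x_1 \in M$ together with zero-sum freeness then gives $x_1 \in \{x_2+x_4,\, x_3+x_4,\, x_2+x_3+x_4\}$, and repeating for $x_2, x_3$ yields a short three-way branch that solves directly to the coefficients $\{1,3,4,7\}$ modulo $9$. Your route would work in principle, but it trades this short branch for an enumeration of compatible coincidence patterns on seven size-two classes, which is exactly the step you left open.
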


\begin{proof}
Clearly, if $T=x\cdot(3x)\cdot(4x)\cdot(7x)$ and $\ord(x)=9$, we have $|\Sigma(T)|=8$.
Next, assume that $T=x_1\cdot x_2\cdot x_3\cdot x_4$ and $|\Sigma(T)|=8$, by Lemma~\ref{lemma 2}, we have $T$ contains no elements of order 2.

{\bf Claim}: There exists $\{i_1,i_2,i_3\}\subset [1,4]$, such that $|\Sigma(x_{i_1}\cdot x_{i_2}\cdot x_{i_3})|=7$.

Assume to the contrary that $|\Sigma(x_{i_1}\cdot x_{i_2}\cdot x_{i_3})|\le 6$ for every $\{i_1,i_2,i_3\}\subset [1,4]$. Since $T$ contains no elements of order 2, by Lemma \ref{smallset}   we have $|\Sigma(x_{i_1}\cdot x_{i_2}\cdot x_{i_3})|= 6$. Note that $|\Sigma(x_1\cdot x_2\cdot x_3)|=6$ implies that $x_1=x_2+x_3$ or $x_2=x_1+x_3$ or $x_3=x_1+x_2$. Without loss of generality assume that  $x_1=x_2+x_3$. Since $|\Sigma(x_1\cdot x_2\cdot x_4)|=6$, we infer that $x_2=x_1+x_4$ or $x_4=x_1+x_2$. If $x_2=x_1+x_4$, then $x_1+x_2=x_2+x_3+x_1+x_4$ implies that $x_3+x_4=0$, giving a contradiction. Hence $x_4=x_1+x_2$. Since $|\Sigma(x_1\cdot x_3\cdot x_4)|=6$, we have $x_3=x_1+x_4$. Then $x_1+x_3=x_2+x_3+x_1+x_4$, which implies that $x_2+x_4=0$, giving a contradiction. This proves the Claim.

By the Claim we may assume that $|\Sigma(x_1\cdot x_2\cdot x_3)|=7$. Then
\begin{equation*}
M=\{x_4\} \cup (x_4 +\Sigma(x_1\cdot x_2\cdot x_3))
\end{equation*}
is a subset of $\Sigma(T)$ with $|M|=8$. Therefore $\Sigma(x_1\cdot x_2\cdot x_3) \subset M= \Sigma(T)$. Since $T$ is a zero-sum free subset, we infer that $ x_1\in \{x_2+x_4, x_3+x_4, x_2+x_3+x_4\}.$ We distinguish three cases.

{\bf Case 1.} $x_1=x_2+x_4$. Since $T$ is a zero-sum free subset, we have that $x_2\in \{x_3+x_4, x_1+x_3+x_4\}$ and $x_3\in \{x_1+x_4, x_1+x_2+x_4\}$.

First assume that $x_2=x_3+x_4$. If $x_3=x_1+x_4$, again since $T$ is a zero-sum free subset, we infer that $x_1+x_2+x_3= x_4$. Then $x_2=7x_1, x_3=4x_1, x_4=3x_1$ and $\ord(x_1)=9$. Let $x=x_1$ and we are done.  If $x_3=x_1+x_2+x_4$, then $x_2+x_3\notin M$, yielding a contradiction.

Next assume that $x_2=x_1+x_3+x_4$. Then $x_1+x_2\notin M$, yielding a contradiction too.

{\bf Case 2.} $x_1=x_3+x_4$. Similar to Case 1.

{\bf Case 3.} $x_1=x_2+x_3+x_4$. We may also assume that $x_2=x_1+x_3+x_4$ and $x_3=x_1+x_2+x_4$, or it reduce to Case 1 or Case 2. But these imply that $x_1+x_2=x_4$ and $x_1+x_3=x_4$, which is impossible.
\end{proof}

\section{Proof of Theorem \ref{main result}}
In this section, let
\begin{equation*}
S=x_1\cdot \ldots \cdot x_6
\end{equation*}
be a zero-sum free subset of $G$ and let $\mathcal{A}_1,\ldots,\mathcal{A}_r$
be as introduced in the second section. Let (bi), (cj) be defined as in Lemma~\ref{x4}, Lemma~\ref{x5}.

\begin{lemma}\label{lemma for order 2}
Suppose $\ord(x_1)=2$. Then $|\Sigma(S)|=19$ if and only if  there exists $g, h\in G$ such that $S$ is one of the following forms:
\begin{itemize}
\item[(a1)] $S=x_1\cdot g \cdot h\cdot (x_1+h)\cdot (g+h)\cdot (x_1+g+h)$, where $2g \in \langle x_1 \rangle$;

\item[(a2)] $S=x_1\cdot g\cdot 2g\cdot 3g\cdot (x_1+g)\cdot (x_1+2g)$.
\end{itemize}
\end{lemma}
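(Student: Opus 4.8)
The plan is to reduce the length‑six problem to a length‑five problem over the quotient $\bar G := G/\langle x_1\rangle$, exploit the order‑two hypothesis to obtain an \emph{exact} count, and then classify the possible quotient sequences.

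Set $H=\langle x_1\rangle$, so $|H|=2$, let $\varphi\colon G\to\bar G$ be the canonical epimorphism, and write $S=x_1\cdot S_2$ with $S_2=x_2\cdots x_6$. For the nontrivial (``only if'') direction I would first record the decomposition $\Sigma(S)=\Sigma(S_2)\cup\{x_1\}\cup(x_1+\Sigma(S_2))$ according to whether a subset of $S$ contains $x_1$. Because $2x_1=0$, one has $x_1\notin\Sigma(S_2)$ (a subset of $S_2$ summing to $x_1$ would, together with $x_1$, be a zero-sum subset of $S$), and likewise $0\notin\Sigma(S_2)$ gives $x_1\notin x_1+\Sigma(S_2)$; the same reasoning shows $\varphi(S_2)$ is zero-sum free over $\bar G$. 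Grouping $\Sigma(S_2)$ by $\varphi$-fibres (each fibre of $\varphi$ has exactly two elements, differing by $x_1$), every fibre meeting $\Sigma(S_2)$ contributes exactly two elements to $\Sigma(S_2)\cup(x_1+\Sigma(S_2))$, whence the exact identity
\[
|\Sigma(S)| = 2\,|\Sigma(\varphi(S_2))| + 1 .
\]
This sharpens the inequality of Lemma~\ref{phis2} in the present situation, and shows $|\Sigma(S)|=19$ is equivalent to $|\Sigma(\varphi(S_2))|=9$. So the problem becomes: classify the zero-sum free sequences of length $5$ over $\bar G$ with exactly $9$ subset sums.

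Next I would pin down the multiplicity pattern of $\varphi(S_2)$. Since $|H|=2$ we have $\mathsf h(\varphi(S_2))\le 2$. If $\varphi(S_2)$ is squarefree it is a zero-sum free subset of length $5$, so $|\Sigma|\ge 13$ by Lemma~\ref{smallset}, excluded. If its pattern is $2,1,1,1$, say $\varphi(S_2)=a^2bcd$, then applying Lemma~\ref{1,t} to the splitting $a\cdot(abcd)$ gives $|\Sigma(\varphi(S_2))|\ge 1+|\Sigma(\{a,b,c,d\})|\ge 9$; equality would force $|\Sigma(\{a,b,c,d\})|=8$, so by Lemma~\ref{4} $\{a,b,c,d\}=\{y,3y,4y,7y\}$ with $\ord(y)=9$, and a direct check shows that doubling any one of these four elements creates a zero-sum, contradicting that $\varphi(S_2)$ is zero-sum free. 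Hence this pattern gives $|\Sigma|\ge 10$ and is excluded. Therefore $\varphi(S_2)=u\cdot v^2\cdot w^2$ with $u,v,w$ distinct and $\ord_{\bar G}(v),\ord_{\bar G}(w)\ge 3$.

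The analysis of this last case is the heart of the argument, and the step I expect to be the main obstacle. Put $B=\{\beta v+\gamma w : \beta,\gamma\in\{0,1,2\}\}$, so that $\Sigma(v^2w^2)=B\setminus\{0\}$ and $\Sigma(\varphi(S_2))=(B\setminus\{0\})\cup(u+B)$, giving
\[
|\Sigma(\varphi(S_2))| = 2|B| - 1 - |B\cap(u+B)| .
\]
Since $v+w\notin\{v,2v,w,2w\}$ (each alternative forces $v=w$, an element of order $\le 2$, or $v+w=0$, all impossible), one gets $|B|\ge 6$; a nonempty overlap is needed, for otherwise $|\Sigma|=2|B|-1\ge 11$, and this forces $u\in\langle v,w\rangle$, while a short count of the overlaps of the translate $u+B$ with the grid $B$ rules out $|B|=9$. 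The condition $|\Sigma(\varphi(S_2))|=9$ then reads $|B\cap(u+B)|=2|B|-10$ for $|B|\in\{6,7,8\}$; tracking the forced coincidences among $v,2v,w,2w,v+w,\dots$ and the admissible value of $u$ shows that, up to permuting the three values, only two relation patterns survive: $w=u+v$ with $2u=0$ (here $|B|=8$), and $w=2v,\ u=3v$ (here $|B|=7$). Finally I would lift through $\varphi$: a multiplicity-$2$ value yields a pair $\{p,p+x_1\}\subset S_2$ and the multiplicity-$1$ value a single $r$, so $S=x_1\cdot r\cdot p\cdot(p+x_1)\cdot q\cdot(q+x_1)$; substituting the two patterns and renaming recovers exactly (a1) (the relation $2u=0$ becoming $2g\in\langle x_1\rangle$) and (a2). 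The ``if'' direction is the same computation read backwards: for each of (a1),(a2) one checks $\varphi(S_2)=u\cdot v^2\cdot w^2$ has $|B|=8$ resp.\ $7$, hence $|\Sigma(\varphi(S_2))|=9$ and $|\Sigma(S)|=19$. The delicate part throughout is this finite but fiddly case analysis for $|B|\in\{6,7,8\}$, in particular eliminating $|B|=6$ and handling degeneracies such as $2v=2w$.
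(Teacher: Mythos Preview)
Your reduction to the quotient $\bar G=G/\langle x_1\rangle$, the observation that $\varphi(S_2)$ is zero-sum free with $\mathsf h(\varphi(S_2))\le 2$, and the elimination of the multiplicity patterns $1^5$ and $2\cdot 1^3$ (the latter via Lemma~\ref{4}) all match the paper's argument. You diverge from the paper in two places.

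First, you prove the \emph{exact} identity $|\Sigma(S)|=2|\Sigma(\varphi(S_2))|+1$, whereas the paper only invokes Lemma~\ref{phis2} for the inequality $|\Sigma(\varphi(S_2))|\le 9$ and dismisses the ``if'' direction as ``easily proved''. Your identity is a genuine sharpening in this order-$2$ situation and makes both directions transparent.

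Second, for the surviving pattern $\varphi(S_2)=c\cdot a^2\cdot b^2$ the paper applies Lemma~\ref{1,t} to the splitting $(a\cdot b\cdot c)\cdot(a\cdot b)$, obtaining $|\Sigma(\{a,b,c\})|\le 6$; this forces one of the three relations $a=b+c$, $b=a+c$, $c=a+b$, after which a short three-case check (building an explicit nine-element subset of $\Sigma(\varphi(S_2))$ and reading off the missing constraints) finishes. Your grid approach via $B=\{\beta v+\gamma w\}$ and the overlap formula $|\Sigma|=2|B|-1-|B\cap(u+B)|$ gets to the same endpoint but with more bookkeeping: the bound $|B|\ge 6$ is not established by the sentence you give (nothing you wrote rules out $v=2w$ or $2v=2w$), though it is true---indeed $|B|\ge 7$, since $2v+2w$ cannot coincide with any of $0,v,2v,w,2w,v+w$ by zero-sum freeness---and the elimination of $|B|=9$ likewise needs a real argument about how far a translate of a $3\times 3$ grid can overlap itself. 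None of this is wrong, but the paper's route through $|\Sigma(\{a,b,c\})|\le 6$ sidesteps all of it in one line. In short: your exact identity is the cleaner way to frame the equivalence, while the paper's three-element reduction is the cleaner way to finish the classification.
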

\begin{proof}
Since $S$ is zero-sum free, it can be easily proved that $|\Sigma(S)|=19$ when $S$ is of form (a1) or (a2). Next assume that $|\Sigma(S)|=19$. We set
\begin{center}
$S=S_1S_2$ where $S_1=x_1$ and $S_2=x_2\cdot \ldots \cdot x_6$.
\end{center}
Let $H=\langle x_1\rangle=\{0,x_1\}$ and $\varphi: G\rightarrow G/H$ the canonical epimorphism, then $\varphi(S_2)=\varphi(x_2)\cdot \ldots \cdot \varphi(x_6)$. It follows from the proof of Theorem~3.2 in \cite{GLPS} that
\begin{center}
$\varphi(S_2)$ is zero-sum free and $\mathsf h(\varphi(S_2))\leq 2$.
\end{center}
So we have $|\supp(\varphi(S_2))|\geq3$. Since $|\Sigma(S)|=19$, by Lemma~\ref{phis2}, we infer that $$|\Sigma(\varphi(S_2))|\leq 9.$$ It follows from Lemma~\ref{smallset} that $|\supp(\varphi(S_2))|\leq 4$.

We first show that $|\supp(\varphi(S_2))|\neq 4$. Otherwise we can write $\varphi(S_2)=a^2\cdot b\cdot c \cdot d$, where $a, b, c, d \in G/H$ are pairwise distinct. Let $U_1=a \cdot b\cdot c \cdot d$ and $U_2=a$. If $|\Sigma(U_1)|\ge 9$, then by Lemma~\ref{1,t} we have $|\Sigma(\varphi(S_2))| \ge |\Sigma(U_1)|+|\Sigma(U_2)| \ge 10$, yielding a contradiction. Hence $|\Sigma(U_1)| \le 8.$ By Lemma~\ref{4}, there exists $x\in G/H$ such that $U_1=x\cdot(3x)\cdot(4x)\cdot(7x)$ and $\ord(x)=9$. Hence  $\varphi(S_2)=(ix) \cdot x\cdot(3x)\cdot(4x)\cdot(7x)$, where $i \in \{1,3,4,7\}$. This is impossible
since $\varphi(S_2)$ is zero-sum free.

Next assume that $|\supp(\varphi(S_2))|=3$. Since $|S_2|=5$ we may assume that $\varphi(S_2)=a^2\cdot b^2\cdot c$. Since $\varphi(S_2)$ is zero-sum free, we must have $\ord(a)\neq 2$ and $\ord(b)\neq 2$. Let $U_1=a\cdot b\cdot c, U_2=a\cdot b$. By Lemma~\ref{smallset}, we have $|\Sigma(U_2)|=3$. Note that $|\Sigma(\varphi(S_2))|\leq 9$, it follows from Lemma~\ref{1,t} that $|\Sigma(U_1)|\leq 6$. Then we have $a=b+c, b=a+c$ or $c=a+b$. We distinguish three cases.

{\bf Case 1.} $a=b+c$. We first show that $\ord(c)=2$. If  $\ord(c)\neq 2$, let
$A=\{a, b, a+b, 2a, 2b, 2a+b, a+2b, 2a+2b, 2a+2b+c\} \subset \Sigma(\varphi(S_2)).$
Then $A$ is a set of 9 distinct elements and hence $A= \Sigma(\varphi(S_2))$. An easy calculation shows that $|\{ 2a+b+c, a+c\} \cap A| \le 1$, yielding a contradiction. Hence $\ord(c)=2$. Assume that $\varphi(x_2)=c, \varphi(x_3)=\varphi(x_4)=a,  \varphi(x_5)=\varphi(x_6)=b$. Let $g=x_2, h=x_5$, then $2g \in \langle x_1 \rangle$ and we can write $S$ as $S=x_1\cdot g \cdot h\cdot (x_1+h)\cdot (g+h)\cdot (x_1+g+h)$.

{\bf Case 2.} $b=a+c$. Similar to Case 1.

{\bf Case 3.} $c=a+b$. Since $\varphi(S_2)$ is zero-sum free, we have $\ord(c)\neq 2$. Let
$B=\{a, b, a+b, 2a+b, a+2b, 2a+2b, 2a+b+c, a+2b+c, 2a+2b+c\}\subset \Sigma(\varphi(S_2)).$ Then $B$ is a set of 9 distinct elements and hence $B= \Sigma(\varphi(S_2))$. Thus $2a \in B$. An easy calculation shows that $2a=b$ or $2a=a+2b$. In both cases we can write $S$ as $S=x_1\cdot g\cdot 2g\cdot 3g\cdot (x_1+g)\cdot (x_1+2g)$.
\end{proof}

\begin{lemma}\label{20}
Suppose $S$ contains no elements of order $2$ and there exists $j \in [1,r]$ and $\tau\in P_6$ such that $\mathcal{A}_j \in  \{\mbox{b2, c7}\}.$ Then $|\Sigma(S)|\geq 20$.
\end{lemma}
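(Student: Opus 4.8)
The plan is to convert the purely combinatorial description of $\mathcal{A}_j$ into additive relations among $x_1,\dots,x_6$, put $S$ into a normal form, and then bound $|\Sigma(S)|$ from below by splitting $\Sigma(S)$ along the cosets of a well-chosen cyclic subgroup. Since we only want $|\Sigma(S)|\ge 20$, it suffices to exhibit $20$ distinct subset sums. After relabelling so that $\tau=\mathrm{id}$, equating the four subset sums inside a class of type (b2) gives $x_2+x_3=x_4+x_5=x_6$ and $x_1=2x_6$, so with $w=x_6$, $s=x_2$, $t=x_4$ we have $S=(2w)\cdot w\cdot s\cdot(w-s)\cdot t\cdot(w-t)$. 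For a class of type (c7) the five equalities give $x_3+x_4=x_5+x_6=x_2$ and $x_3+x_5=x_4+x_6=x_1$; combining these forces $2(x_4-x_5)=2(x_3-x_6)=0$, and as the $x_i$ are distinct the element $e:=x_4-x_5=x_3-x_6$ is nonzero with $2e=0$. Setting $a=x_5$, $b=x_6$ this yields $S=a\cdot b\cdot(a+b)\cdot(a+e)\cdot(b+e)\cdot(a+b+e)$.

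For case (b2) I would work with $H=\langle w\rangle$ and $\varphi\colon G\to G/H$. Zero-sum freeness forces $\ord(w)\ge 6$: indeed $\ord(w)\le 2$ makes $x_1=2w$ or $x_6=w$ vanish, while $\ord(w)\in\{3,4,5\}$ turns $\{x_1,x_6\}$, $Sx_6^{-1}$, or $S$ itself into a nonempty zero-sum. Hence $w,2w,3w,4w,5w$ are five distinct elements of $\Sigma(S)\cap H$. Now $\varphi(S)=0^2\cdot \bar s\cdot(-\bar s)\cdot\bar t\cdot(-\bar t)$, where $\bar s=\varphi(s),\bar t=\varphi(t)$, so every coset of $H$ meeting $\Sigma(S)$ is indexed by $\Sigma(\varphi(S_2))$ with $S_2=s\cdot(w-s)\cdot t\cdot(w-t)$; note $0\in\Sigma(\varphi(S_2))$ because $s+(w-s)=w\in H$. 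For each of the $m-1$ nonzero cosets $\bar g$ (here $m=|\Sigma(\varphi(S_2))|$), a subset $X_0\subseteq S_2$ realizing $\bar g$ produces the four distinct sums $\sigma(X_0)+\{0,w,2w,3w\}$, all lying in that coset. Summing the disjoint cosets gives $|\Sigma(S)|\ge 5+4(m-1)=4m+1$, which is $\ge 21$ as soon as $m\ge 5$.

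For case (c7) I would use the order-two quotient $\pi\colon G\to\bar G=G/\langle e\rangle$, under which $\pi(S)=u^2v^2(u+v)^2$ with $u=\pi(a)$, $v=\pi(b)$. Since $\langle e\rangle$ has order $2$ we have $|\Sigma(S)|=|\Sigma(\pi(S))|+D$, where $D$ counts the elements of $\Sigma(\pi(S))$ both of whose two preimages occur as subset sums of $S$. A direct enumeration of the attainable sums $pu+qv$ of $u^2v^2(u+v)^2$ shows $|\Sigma(\pi(S))|=18$ whenever $u,v$ generate a large enough subgroup; moreover $a+b$ and $a+b+e$ (the two lifts of $u+v$), as well as $2a+b$ and $2a+b+e$ (the two lifts of $2u+v$), are all subset sums of $S$, so $D\ge 2$ and $|\Sigma(S)|\ge 20$.

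The genuinely delicate part, which I expect to be the main obstacle, is the degenerate range left open above: for (b2) the cases $m\le 4$, which force $\bar s,\bar t$ into a small cyclic subgroup of $G/\langle w\rangle$ (so $s$ and $t$ lie in or very near $\langle w\rangle$ and $S$ is essentially contained in a single cyclic group), and for (c7) the cases where $u$ or $v$ has small order. In each such configuration I would argue directly, either exhibiting an explicit nonempty zero-sum subset (contradicting that $S$ is zero-sum free) or enumerating $\Sigma(S)$ by hand and checking $|\Sigma(S)|\ge 20$, invoking Lemma~\ref{lemma T} to certify that the listed sums are pairwise distinct. The real work is the bookkeeping: keeping every coincidence compatible with $S$ being a genuine six-element zero-sum free set containing no element of order $2$.
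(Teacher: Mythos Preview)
Your reductions to normal forms are correct---for (b2) you obtain $S=\{2w,w,s,w-s,t,w-t\}$, and for (c7) you correctly extract an element $e$ of order $2$ with $S=\{a,b,a+b,a+e,b+e,a+b+e\}$---and the coset count $|\Sigma(S)|\ge 5+4(m-1)$ and the fiber formula $|\Sigma(S)|=|\Sigma(\pi(S))|+D$ are both sound in the generic range. But the degenerate cases you flag as ``the main obstacle'' are not side issues: they are essentially the whole content of the lemma, and you have not addressed them. For (b2) with $m\le 4$, in particular $\bar s=\bar t=0$ (i.e.\ $S\subset\langle w\rangle$, a six-term zero-sum free set in a single cyclic group) is fully compatible with all hypotheses, and your bound then collapses to $|\Sigma(S)|\ge 5$. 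For (c7), any nontrivial relation among $u,v$ drops $|\Sigma(\pi(S))|$ below $18$, and zero-sum freeness of $S$ does not rule this out. In both situations your fallback is ``enumerate $\Sigma(S)$ by hand'', which is exactly the statement that remains to be proved.

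The paper avoids this generic/degenerate split altogether. It never passes to a quotient: for each of (b2) and (c7) it writes down $19$ explicit subset sums, checks they are pairwise distinct using only zero-sum freeness, distinctness of the $x_i$, and the absence of order-$2$ elements, and then shows that assuming $|\Sigma(S)|=19$ forces one further sum ($x_1+x_3$ for (b2), $x_3+x_6$ for (c7)) to coincide with one of a short explicit list of the already-listed sums, each possibility producing a contradiction. This is less structural than your approach but is uniform and complete. Your argument cleanly explains why the generic case is easy, yet the residual small cases would require precisely this direct enumeration---so as written the proposal is a reformulation rather than a proof.
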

\begin{proof}
We just prove the case that $\mathcal{A}_j$ is of form (b2), the other case is similar and we will give the proof in the APPENDIX. Without loss of generality we assume that $x_1=x_2+x_3+x_4+x_5=x_2+x_3+x_6=x_4+x_5+x_6$. Assume to the contrary that $|\Sigma(S)| \leq 19$. Let

$a_1=x_1=x_2+x_3+x_4+x_5=x_2+x_3+x_6=x_4+x_5+x_6$,

$a_2=x_2$,

$a_3=x_4$,

$a_4=x_6=x_2+x_3=x_4+x_5$,

$a_5=x_1+x_2=x_2+x_4+x_5+x_6$,

$a_6=x_1+x_4=x_2+x_3+x_4+x_6$,

$a_7=x_1+x_6=x_1+x_2+x_3=x_1+x_4+x_5=x_2+x_3+x_4+x_5+x_6$,

$a_8=x_2+x_4$,

$a_9=x_2+x_6=x_2+x_4+x_5$,

$a_{10}=x_4+x_6=x_2+x_3+x_4$,

$a_{11}=x_1+x_2+x_4$,

$a_{12}=x_1+x_2+x_6=x_1+x_2+x_4+x_5$,

$a_{13}=x_1+x_4+x_6=x_1+x_2+x_3+x_4$,

$a_{14}=x_2+x_4+x_6$,

$a_{15}=x_1+x_2+x_3+x_6=x_1+x_4+x_5+x_6=x_1+x_2+x_3+x_4+x_5$,

$a_{16}=x_1+x_2+x_4+x_6$,

$a_{17}=x_1+x_2+x_3+x_4+x_6$,

$a_{18}=x_1+x_2+x_4+x_5+x_6$,

$a_{19}=x_1+x_2+x_3+x_4+x_5+x_6$.

A straightforward computation shows that $a_1, a_2, \ldots , a_{19}$ are pairwise distinct. Then
\begin{equation*}
A=\{a_i \mid 1\le i \le 19 \} = \Sigma(S),
\end{equation*}
and hence $a_{20}=x_1+x_3=x_3+x_4+x_5+x_6 \in \Sigma(S)$. Since $S$ is zero-sum free and $S$ contains no elements of order 2, we infer that $a_{20}\in \{a_2,a_8,a_{11},a_{14},a_{16},a_{18}\}$.

If $a_{20}\in \{a_2,a_8,a_{11}\}$, it is easy to verify that $a_{21}=x_3+x_6=x_3+x_4+x_5 \not\in A$, yielding a contradiction. If $a_{20}\in \{a_{14},a_{16},a_{18}\}$, then $a_{22}=x_1+x_3+x_6=x_1+x_3+x_4+x_5 \not\in A$, yielding a contradiction too.
\end{proof}

\begin{lemma}\label{19}
Suppose $S$ contains no elements of order $2$ and there exists $j \in [1,r]$ and $\tau\in P_6$ such that $\mathcal{A}_j \in  \{\mbox{b1, b3, b4, c1, c2, ..., c6}\}.$ Then $|\Sigma(S)|=19$ if and only if there exists $g, h\in G$ such that $S$ is one of the following forms:
\begin{itemize}
\item[(a3)]$S=(-2g)\cdot g\cdot (3g)\cdot (4g)\cdot (5g)\cdot (6g)$ and $\ord(g)=20$;

\item[(a4)] $S=(-3g)\cdot g\cdot (4g)\cdot (5g)\cdot (9g)\cdot (12g)$ and $\ord(g)=20$;

\item[(a5)] $S=(g)\cdot h\cdot (g+h)\cdot (g+2h)\cdot (2g+h)\cdot (4g+4h)$ and $2g=2h$, $\ord(g)=\ord(h)=10$.
\end{itemize}
\end{lemma}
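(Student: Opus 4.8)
The plan is to prove both directions, the forward (``if'') direction being a routine verification and the ``only if'' direction requiring a case analysis that mirrors the method of Lemma~\ref{20}. For the ``if'' direction I would take each of the three candidate sequences (a3), (a4), (a5), check directly that it is zero-sum free using the stated orders, and then list its subset sums and count the distinct values, confirming $|\Sigma(S)|=19$ in each case. Since each form is generated by one or two explicit elements, this reduces to a finite arithmetic check in $\mathbb{Z}/20\mathbb{Z}$ for (a3), (a4) and in the appropriate two-generator group for (a5).

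The substance is the ``only if'' direction. Assume $|\Sigma(S)|=19$ and that some class $\mathcal{A}_j$ has one of the nine admissible forms. I would treat the forms one at a time; in each case the first step is to apply the permutation $\tau$ to relabel $x_1,\ldots,x_6$ and read off the explicit linear relations that the form encodes (for instance, form (b1) forces $x_2=x_3+x_4$, $x_6=x_2+x_5$, and $x_1=x_2+x_6$, after which $S$ is parametrised by three elements). The second step is to build, from these relations and the zero-sum freeness of $S$, an explicit list $a_1,\ldots,a_{19}$ of nineteen elements of $\Sigma(S)$, verify they are pairwise distinct, and conclude that $\Sigma(S)=\{a_1,\ldots,a_{19}\}$ exactly as in Lemma~\ref{20}. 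The third step is to take a subset sum not yet named --- a suitable probe such as some $x_i+x_j$ --- which must coincide with one of the $a_k$; invoking that $S$ is zero-sum free and contains no element of order $2$ sharply restricts the admissible $k$, and each admissible value either produces a further subset sum escaping the list (a contradiction) or forces an additional relation among the parameters.

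Iterating the probe argument collapses the free parameters to the rigid shapes of (a3), (a4), (a5): in the first two cases all six elements become multiples of a single $g$, and tracking the residues together with zero-sum freeness pins $\ord(g)$ to be exactly $20$; in the last case two generators $g,h$ survive subject to $2g=2h$ with $\ord(g)=\ord(h)=10$.

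The main obstacle will be the bookkeeping rather than any single conceptual step: there are nine forms, and within each the probe branches into several sub-cases, so the difficulty is organisational. I expect that several forms collapse to the same final structure and can be shortened by exploiting the duality $\mathcal{A}_\nu \mapsto \overline{\mathcal{A}_\nu}$, which preserves class sizes, together with the relabelling symmetries; the genuinely delicate point in each case is choosing the probe element whose placement is most constrained, so that the sub-case tree stays finite and terminates either in a contradiction or in one of the three target forms.
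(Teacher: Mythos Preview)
Your overall strategy is right and matches the paper's: verify the three forms directly, then for the converse run a probe-and-branch case analysis inside each admissible form. But there is one concrete error in your plan that would stall the argument. You write that from the relations encoded by the form you will ``build \ldots\ an explicit list $a_1,\ldots,a_{19}$ of nineteen elements of $\Sigma(S)$, verify they are pairwise distinct, and conclude that $\Sigma(S)=\{a_1,\ldots,a_{19}\}$ exactly as in Lemma~\ref{20}.'' This is precisely where Lemma~\ref{19} differs from Lemma~\ref{20}. The forms (b2) and (c7) carry enough relations to force nineteen distinct sums immediately, which is why any probe there yields a contradiction and $|\Sigma(S)|\ge 20$. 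The forms (b1), (b3), (b4), (c1)--(c6) carry \emph{fewer} relations, and the initial list of provably distinct sums has only $15$ to $17$ elements (for instance, $17$ for (b3) and (c1), $15$ for (b1) and (b4)). You therefore have a deficit of two to four sums, and the probe argument does not immediately produce contradictions: it produces \emph{further relations}, and it is exactly this slack that allows the three exceptional structures (a3), (a4), (a5) to appear. The bookkeeping is accordingly heavier than you anticipate, because each probe may land in the existing list, land outside it (using up budget), or coincide with a previous probe.

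One organisational point that will save you work: rather than handling all nine forms independently, observe that each of (c2)--(c6) implies, after a permutation, a relation of type (b3). For example, (c2) gives $x_{\tau(2)}=x_{\tau(3)}+x_{\tau(5)}+x_{\tau(6)}=x_{\tau(4)}+x_{\tau(5)}=x_{\tau(1)}+x_{\tau(3)}$, which is a (b3) pattern. So only (b1), (b3), (b4), (c1) need a full analysis; this is the reduction the paper uses, and it is more direct than the duality shortcut you proposed.
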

\begin{proof} By Theorem~\ref{S=6}, $|\Sigma(S)|\ge 19$ in all cases. Next assume that $|\Sigma(S)|= 19$.  We just prove the case that $\mathcal{A}_j$ is of form (b3), the other cases are similar and we will give the proofs in the APPENDIX. Without loss of generality we assume that $x_1=x_2+x_3=x_4+x_5=x_2+x_4+x_6$. Let

$a_1=x_1=x_2+x_3=x_4+x_5=x_2+x_4+x_6$,

$a_2=x_2$,

$a_3=x_3=x_4+x_6$,

$a_4=x_4$,

$a_5=x_5=x_2+x_6$,

$a_{6}=x_1+x_2=x_2+x_4+x_5$,

$a_7=x_1+x_3=x_1+x_4+x_6=x_3+x_4+x_5=x_2+x_3+x_4+x_6$,

$a_{8}=x_1+x_4=x_2+x_3+x_4$,

$a_9=x_1+x_5=x_1+x_2+x_6=x_2+x_3+x_5=x_2+x_4+x_5+x_6$,

$a_{10}=x_1+x_6=x_3+x_5=x_2+x_3+x_6=x_4+x_5+x_6$,

$a_{11}=x_2+x_4$,

$a_{12}=x_1+x_2+x_3=x_1+x_4+x_5=x_1+x_2+x_4+x_6=x_2+x_3+x_4+x_5$,

$a_{13}=x_1+x_3+x_5=x_1+x_2+x_3+x_6=x_1+x_4+x_5+x_6=x_2+x_3+x_4+x_5+x_6$,

$a_{14}=x_1+x_2+x_3+x_5=x_1+x_2+x_4+x_5+x_6$,

$a_{15}=x_1+x_3+x_4+x_5=x_1+x_2+x_3+x_4+x_6$,

$a_{16}=x_1+x_2+x_3+x_4+x_5$,

$a_{17}=x_1+x_2+x_3+x_4+x_5+x_6$.

Since $S$ is a zero-sum free subset and $S$ contains no elements of order~2, by Lemma~\ref{lemma T} we infer that $a_1,a_2,\ldots,a_{17}$ are pairwise distinct. Let $A=\{a_1, a_2,\ldots, a_{17}\} \subset \Sigma(S).$ Then $|\Sigma(S) \setminus A |=2$. Note that $$a_{18}=x_1+x_2+x_4 \notin A\setminus \{a_3, a_{5}, a_{10}\}.$$ We distinguish four cases.

{\bf Case 1.} $a_{18}=a_3$. That is $x_1+x_2+x_4=x_3=x_4+x_6$, and then $x_6=x_1+x_2=x_2+x_4+x_5$. Since $S$ is a zero-sum free subset and $S$ contains no elements of order 2, we infer that
$$a_{19}=x_1+x_2+x_3+x_4\notin A\setminus \{a_{5}\};$$
$$a_{20}=x_1+x_3+x_4+x_5+x_6\notin A\setminus \{a_2\}.$$

If $a_{19}=a_5$, then $x_1+x_2+x_3+x_4=x_5=x_2+x_6$. Therefore $x_6=x_1+x_2=x_2+x_4+x_5=x_1+x_3+x_4$ and $x_2=x_3+x_4$. These imply that $$x_1=4x_2,\, x_3=3x_2,\, x_4=-2x_2,\, x_5=6x_2, \, x_6=5x_2.$$ Since $|\Sigma(S)|=19$, we infer that $\ord(x_2)=20$. Let $g=x_2$, then $S$ is of form (a3).

If $a_{20}=a_2$, then $x_1+x_3+x_4+x_5+x_6=x_2$. Then $x_2=4x_1,\, x_3=-3x_1,\, x_4=-8x_1,\ x_5=9x_1, \, x_6=5x_1.$ Since $|\Sigma(S)|=19$, we infer that $\ord(x_2)=20$. Let $g=x_1$, then $S$ is of form (a4).

Next we assume that $a_{19} \ne a_5$ and $a_{20} \ne a_2$. Then $a_{19}, a_{20} \not \in A$. Since $S$ contains no elements of order $2$, we have that $a_{19} \ne a_{20}$. Therefore $$\Sigma(S)= A \cup\{a_{19}, a_{20}\}.$$ Now $a_{21}=x_1+x_3+x_4+x_6\in \Sigma(S)= A \cup\{a_{19}, a_{20}\}.$ Note that $a_{19} \ne a_5$ means $x_2\ne x_3+x_4$. Again since $S$ is a zero-sum free subset and $S$ contains no elements of order 2, we have $$a_{21}=x_1+x_3+x_4+x_6\in \{a_2, a_5\}.$$

If $a_{21}=x_1+x_3+x_4+x_6=a_2=x_2$, we infer that $x_1+x_3+x_4\not \in \Sigma(S)$, yielding a contradiction. If $a_{21}=x_1+x_3+x_4+x_6=a_5=x_5=x_2+x_6$, we have that $x_2=x_1+x_3+x_4$. Then $x_1+x_2+x_5\not \in \Sigma(S)$, yielding a contradiction, too.

{\bf Case 2.} $a_{18}=a_5$. Let $\rho \in P_6$ such that $$\rho(1)=1,\, \rho(2)=4,\, \rho(3)=5, \, \rho(4)=2,\, \rho(5)=3, \rho(6)=6.$$ Replace $S$ by $\rho(S)$, it reduce to Case 1.

{\bf Case 3.} $a_{18}=a_{10}$. That is $x_1+x_2+x_4=x_1+x_6=x_3+x_5=x_2+x_3+x_6=x_4+x_5+x_6$, and then $x_6=x_2+x_4$. Since $S$ is a zero-sum free subset and $S$ contains no elements of order 2, we infer that
$$a_{19}=x_1+x_2+x_3+x_4\notin A;$$
$$a_{22}=x_1+x_2+x_4+x_5\notin A.$$

By Lemma~\ref{lemma T}, $a_{19} \ne a_{22}$. Therefore $\Sigma(S)= A \cup\{a_{19}, a_{20}\}.$ Since
$$a_{20}=x_1+x_3+x_4+x_5+x_6\in \Sigma(S),$$ we infer that $a_{20}=a_2$, that is $x_1+x_3+x_4+x_5+x_6=x_2$. This forces that $a_{21}=x_1+x_3+x_4+x_6\notin \Sigma(S),$ which is impossible.

{\bf Case 4.} $a_{18}\not\in \{a_3,\, a_5,\, a_6\}$. Then $a_{18}\notin A$ and $x_6 \not\in \{ x_1+x_2, x_1+x_4, x_2+x_4\}$. Let
\begin{equation*}
M=A\cup\{a_{18}\} \subset \Sigma(S).
\end{equation*}

Since $x_6 \not\in \{ x_1+x_2, x_1+x_4, x_2+x_4\}$, we obtain that $$a_{19}=x_1+x_2+x_3+x_4\notin M\setminus \{a_5\};$$ $$a_{22}=x_1+x_2+x_4+x_5 \notin M\setminus \{a_3\}.$$
If  $a_{19}=a_{5}$ and $a_{22}=a_3$, then $x_6=x_1+x_3+x_4=x_1+x_2+x_5$. Since $S$ is a zero-sum free subset and $S$ contains no elements of order~2, we infer that $x_3+x_4\notin M$ and $a_{21}=x_1+x_3+x_4+x_6\notin M$. Since  $x_3+x_4 \ne a_{21}=x_1+x_3+x_4+x_6$, we obtain that $ |\Sigma(S)| \ge |M|+2 \ge 20$, yielding a contradiction. Therefore either $a_{19} \ne a_{5}$ or  $a_{22}\ne a_3$. Note that $a_{19} \ne a_{22}$ and $|\Sigma(S)|=19$, we have that $|M\cap \{a_{19}, a_{22}\}| = 1.$ Let $\rho \in P_6$ be defined as in Case 2. Replace $S$ by $\rho(S)$ if necessary, we may assume that $a_{19} \notin M$ and $a_{22} \in M$. Then $\Sigma(S)=M\cup \{a_{19}\}$ and
$x_1+x_2+x_4+x_5=x_3=x_4+x_6$. It is easy to verify that $$a_{23}=x_1+x_3+x_6=x_3+x_4+x_5+x_6\notin \Sigma(S),$$ yielding a contradiction. This completes the proof.
\end{proof}

Now we are in a position to prove Theorem~\ref{main result}.

{\it Proof of Theorem~\ref{main result}}. If $S$ is one of the form in $(i)-(v)$, it is easy to verify that $|\Sigma(S)|=19$.

Next we assume that $|\Sigma(S)|=19$. If $S$ contains some elements of order 2, by Lemma~\ref{lemma for order 2}, we have $S$ is of form $(i)$ or $(ii)$. Then we may assume that $S$ contains no elements of order 2. By Lemma \ref{4,5}, $|\mathcal{A}_i|\leq 5$ for all $i\in [1,r]$.

If there exists $i \in [1,r]$ such that $|\mathcal{A}_i|=5$, then by Lemma~\ref{x5} and Lemma~\ref{20}, we have there exists $\tau\in P_6$ such that $\mathcal{A}_j \in  \{\mbox{c1, c2, ..., c6}\}.$ By Lemma~\ref{19},  $S$ is of form $(iii)$, $(iv)$ or $(v)$ and we are done. Hence we may assume that $|\mathcal{A}_i|\leq 4$ for all $i\in [1,r]$

If there exists $i\in [1,r]$ such that $|\mathcal{A}_i|=4$, then by Lemma~\ref{x5} and Lemma~\ref{20}, we have $\mathcal{A}_i$ is not of form (b2). If there exists $\tau\in P_6$ such that $\mathcal{A}_j \in  \{\mbox{b1, b3, b4}\}, $ then by Lemma~\ref{19},  $S$ is of form $(iii)$, $(iv)$, or $(v)$ and we are done. Hence we may assume that $|[x_i]| \le 3$ for $i \in [1,6]$.

Let $t$ denote the number of those $i\in [1,6]$ such that $[x_i]=\overline{[x_i]}$. Without loss of generality assume that $\mathcal{A}_{i}=[x_i]=\overline{[x_i]}$ for $i\in [1,t]$. Since $S$ is a zero-sum free subset of $G$ and noting that $x_i, Sx_i^{-1} \in [x_i]$, we have $[x_i]=\{x_i, Sx_i^{-1}\}$. That is $|\mathcal{A}_{i}| =2$ for $i \in [1,t]$. Also since $S$ is a zero-sum free subset, we infer that if $i \ne j$, then $ \overline{[x_i]} \ne [x_j]$. Without loss of generality assume that $\mathcal{A}_{i}=[x_i]$ for $i\in [t+1,6]$ and $\mathcal{A}_{13-i}=\overline{[x_i]}$ for $i\in [t+1,6]$. Then $|\mathcal{A}_{i}| \le 3$ for $i \in [t+1, 12-t]$.  Now assume that $\mathcal{A}_{r}=\{S\}$, we have
\begin{align*}
63&=\sum_{j=1}^{r}|\mathcal{A}_j|
=\sum_{i=1}^{t}|\mathcal{A}_{i}|+\sum_{i=t+1}^{12-t}|\mathcal{A}_{i}|+\sum_{i=13-t}^{r-1}|\mathcal{A}_{i}|+|\mathcal{A}_r|
\\&\le 2t+3(12-2t)+4(r-1-(12-t))+1= 4r-19.
\end{align*}
Hence $r \ge \frac{82}{4} >20$, yielding a contradiction. This completes the proof. \qed

\section{Concluding Remarks}

In this section, we will give some concluding remarks.

{\it Proof of Corollary \ref{corollary}.} Let $G$ be an finite abelian group of odd order. Let $S$ be a zero-sum free subset of $G$ of length $|S|=6$. Note that $|G|$ is odd, we have that $S$ is not of any form in Theorem~\ref{main result}. Then by Theorem~\ref{S=6} and Theorem~\ref{main result}, we infer that $|\Sigma(S)|\ge 20$.    \qed

\begin{lemma}\label{S=5} \cite{GZY}
Let $G$ be an abelian group and $S$ be a zero-sum free subset of $G$ of length $|S|=5$. Then $|\Sigma(S)|=13$ if and only if  there exists $x_1, x_2\in G$ such that $S$ is one of the following forms:
\begin{itemize}
\item[(i)] $S=x_1\cdot x_2 \cdot (x_1+x_2)\cdot 2x_2\cdot (2x_1+x_2)$, where $\ord(x_1)=2$;

\item[(ii)] $S=(-2x_1)\cdot x_1 \cdot (3x_1)\cdot (4x_1)\cdot (5x_1)$, where $\ord(x_1)=14$;
\end{itemize}
\end{lemma}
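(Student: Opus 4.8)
The plan is to prove the nontrivial (``only if'') direction by splitting on whether $S$ contains an element of order $2$; the ``if'' direction is a direct check (for form (ii), with $\ord(g)=14$ one computes $\Sigma(S)=\mathbb{Z}_{14}\setminus\{0\}$, so $|\Sigma(S)|=13$ is immediate, and form (i) is verified by listing its subset sums). Throughout I use that $|\Sigma(S)|=13$ is the \emph{minimum} possible value when $|S|=5$ (Lemma~\ref{smallset}(4)), so we are classifying the extremal sets.

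Suppose first that $S$ contains an element $x_1$ of order $2$. I would mirror the proof of Lemma~\ref{lemma for order 2}: write $S=S_1S_2$ with $S_1=x_1$ and $S_2=x_2\cdot\ldots\cdot x_5$, put $H=\langle x_1\rangle$, and pass to $\varphi\colon G\to G/H$. As there, $\varphi(S_2)$ is zero-sum free with $\mathsf h(\varphi(S_2))\le 2$ (three elements cannot share a coset of the two-element group $H$). Since $|\Sigma(S_1)|=1$, Lemma~\ref{phis2} gives $|\Sigma(S)|\ge 1+2\,|\Sigma(\varphi(S_2))|$, whence $|\Sigma(\varphi(S_2))|\le 6$. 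Now $\varphi(S_2)$ is a zero-sum free sequence of length $4$ with $\mathsf h\le 2$, so either it is a subset (then $|\Sigma|\ge 8$ by Lemma~\ref{smallset}(3), impossible) or it has the shape $a^2bc$ or $a^2b^2$. The shape $a^2bc$ is excluded by a short computation showing it forces $|\Sigma(\varphi(S_2))|\ge 7$: applying Lemma~\ref{1,t} to $(a\cdot b\cdot c)\cdot a$ gives $\ge 6$, and equality would require $|\Sigma(a\cdot b\cdot c)|=5$, which by Lemma~\ref{smallset}(2) means an order-$2$ element is present, and re-expanding $a^2bc$ under the resulting relation yields $7$ distinct sums. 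Hence $\varphi(S_2)=a^2b^2$, and requiring $|\Sigma(a^2b^2)|\le 6$ forces the relation $b=2a$ (up to swapping $a,b$); the only other candidate coincidence, $2a=2b$ alone, still leaves $7$ distinct sums. Lifting $a^2(2a)^2$ back through $\varphi$ and choosing coset representatives then recovers exactly form (i).

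The harder, and in my view main, case is when $S$ has no element of order $2$, where the target is the cyclic form (ii). Here the quotient reduction is unavailable, and the relevant $4$-element subsets are \emph{not} extremal (each has $11$ subset sums in form (ii)), so one cannot simply peel off an element and invoke Lemma~\ref{4}. Instead I would argue directly, in the spirit of Lemmas~\ref{20} and \ref{19}: using the partition $\mathcal{A}=\mathcal{A}_1\uplus\ldots\uplus\mathcal{A}_{13}$, first establish the length-$5$ analogues of the size bounds $|[x_i]|\le 3$ and $|\mathcal{A}_j|\le 4$ (for $\ord(x_i)\ge 3$), and then track, class by class, which subset sums are forced to coincide. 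Because a pure count with these bounds is not by itself contradictory, the coincidences must be exploited one at a time: each forced equality $\sigma(T)=\sigma(T')$, restricted by Lemma~\ref{lemma T} to non-nested, non-adjacent subsets, yields a linear relation among $x_1,\ldots,x_5$, and I expect the consistent systems of such relations to collapse $S$ into a single cyclic progression $\{-2g,\,g,\,3g,\,4g,\,5g\}$. The requirement $|\Sigma(S)|=13$ then pins $\ord(g)=14$, since a larger order produces more than $13$ distinct sums while a smaller one either destroys zero-sum freeness or drops the count below $13$. The main obstacle is precisely this last case: organizing the case analysis of forced subset-sum coincidences so that it terminates in the single cyclic family, which is the bulk of the work in \cite{GZY}.
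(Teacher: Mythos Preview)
The paper does not prove this lemma at all: it is quoted verbatim from \cite{GZY} and used only to deduce Corollary~\ref{corollary2}. So there is no ``paper's own proof'' to compare against; what can be said is whether your sketch is a viable route to the result and how it relates to the methods the present paper uses for $|S|=6$.

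Your plan is the natural one and mirrors the paper's own arguments for $|S|=6$. The order-$2$ case is a clean downscaling of Lemma~\ref{lemma for order 2}: the reduction to $\varphi(S_2)$ of length $4$ with $\mathsf h\le 2$ and $|\Sigma(\varphi(S_2))|\le 6$ is correct (the bound from Lemma~\ref{phis2} is exactly $13\ge 1+2|\Sigma(\varphi(S_2))|$), and the elimination of the subset case via Lemma~\ref{smallset}(3) is fine. Two points deserve tightening. First, your exclusion of the shape $a^2bc$ is only sketched: the step ``re-expanding $a^2bc$ under the resulting relation yields $7$ distinct sums'' hides a small case check (the order-$2$ element cannot be $a$, and the relation forced by $|\Sigma(abc)|=5$ together with $2b=0$ must then be fed back in and the seven sums verified pairwise distinct using zero-sum freeness). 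Second, note that form~(i) as printed appears to contain a typo: with $\ord(x_1)=2$ one has $2x_1+x_2=x_2$, so the last term should presumably read $x_1+2x_2$; your lifting from $\varphi(S_2)=a^2(2a)^2$ indeed lands on $x_1\cdot x_2\cdot(x_1+x_2)\cdot 2x_2\cdot(x_1+2x_2)$, confirming this. For the no-order-$2$ case you give only a strategy --- class sizes, Lemma~\ref{lemma T} constraints, and a case analysis collapsing $S$ to the cyclic family --- and explicitly defer the work to \cite{GZY}. That is honest, but it means your proposal is a plan rather than a proof for this half; the substantive case analysis that pins down the single family $\{-2g,g,3g,4g,5g\}$ with $\ord(g)=14$ is not carried out here.
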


Similar to the proof of Corollary~\ref{corollary}, Lemma~\ref{S=5} implies the following result.
\begin{cor}\label{corollary2}
Let $G$ be an finite abelian group of odd order. Let $S$ be a zero-sum free subset of $G$. Then $|\Sigma(S)|\ge 14$ if $|S|=5$.
\end{cor}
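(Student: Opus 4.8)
The plan is to mimic exactly the proof of Corollary~\ref{corollary}, replacing the inputs for $|S|=6$ by their $|S|=5$ counterparts. The two ingredients I would invoke are the lower bound $|\Sigma(S)|\ge 13$ from Lemma~\ref{smallset}(4), which holds for \emph{every} zero-sum free subset $S$ of length $5$, and the structural characterization of the equality case supplied by Lemma~\ref{S=5}. Together these say that $|\Sigma(S)|$ is at least $13$, and is exactly $13$ only when $S$ takes one of the two listed forms (i) or (ii). So the entire argument reduces to ruling out both extremal forms under the hypothesis that $|G|$ is odd.

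The key observation is that each of the two forms in Lemma~\ref{S=5} forces $G$ to have even order. In form (i) the element $x_1$ satisfies $\ord(x_1)=2$, so $2$ divides $\ord(x_1)$, which divides $|G|$ by Lagrange's theorem; hence $2 \mid |G|$, contradicting that $|G|$ is odd. In form (ii) the element $x_1$ satisfies $\ord(x_1)=14$, so again $2 \mid 14 \mid |G|$, and $|G|$ would be even. Thus when $|G|$ is odd, $S$ cannot be of form (i) or (ii), so by Lemma~\ref{S=5} we cannot have $|\Sigma(S)|=13$.

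Combining this with the bound $|\Sigma(S)|\ge 13$ of Lemma~\ref{smallset}(4) yields $|\Sigma(S)|\ge 14$, as claimed. I expect no genuine obstacle here: the only thing to verify is the elementary divisibility fact that $\ord(x_1)\mid |G|$, so that the presence of an element of order $2$ (form (i)) or of order $14$ (form (ii)) is incompatible with $|G|$ being odd. In this sense the corollary is a direct structural consequence of the classification in Lemma~\ref{S=5}, precisely parallel to how Corollary~\ref{corollary} follows from Theorem~\ref{main result}, where every form (i)--(v) there likewise contains an element of even order (order $2$, $20$, or $10$).
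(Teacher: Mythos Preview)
Your proposal is correct and follows exactly the approach the paper intends: the paper simply remarks that Corollary~\ref{corollary2} is proved ``similar to the proof of Corollary~\ref{corollary}'' via Lemma~\ref{S=5}, and your argument spells out precisely that analogy, using Lemma~\ref{smallset}(4) for the bound $|\Sigma(S)|\ge 13$ and then excluding both extremal forms because each forces an element of even order in $G$.
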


Now based on Corollaries~\ref{corollary} and \ref{corollary2}, similar with the proof of Theorem~1.3 in \cite{GLPS}, we can show that
\begin{cor}\label{corollary3}
Let $G$ be a finite cyclic group of odd order. Let $S$ be a zero-sum free sequence over $G$. If $|S| \ge \frac{6|G|+26}{20}$, then $S$ contains an element $g\in G$ with multiplicity
$$
\mathsf v_g(S)\geq \frac{6|S|-|G|+1}{16}.
$$
\end{cor}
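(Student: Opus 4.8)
The plan is to pit a lower bound on $|\Sigma(S)|$ against the trivial upper bound coming from zero-sum freeness. Since $0\notin\Sigma(S)$ and $\Sigma(S)\subseteq G$, we always have $|\Sigma(S)|\le|G|-1$. Thus it suffices to prove the lower estimate
\[
|\Sigma(S)|\ge 6|S|-16\,\mathsf h(S),
\]
since substituting this into $|\Sigma(S)|\le|G|-1$ and solving for $\mathsf h(S)$ gives precisely $\mathsf h(S)\ge\frac{6|S|-|G|+1}{16}$. This is the same mechanism as in the proof of Theorem~1.3 of \cite{GLPS}; the only difference is that the even-order block bounds $13$ and $19$ are replaced by the odd-order bounds $14$ and $20$ furnished by Corollaries~\ref{corollary2} and~\ref{corollary}, and it is exactly these sharper constants that produce the coefficients $20$ and $16$ in the statement.

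First I would assemble the per-subset input. As $|G|$ is odd, $S$ has no element of order $2$, so every nonempty subset $T$ of $S$ is a zero-sum free subset of an odd-order group. Lemma~\ref{smallset}, Lemma~\ref{4}, Corollary~\ref{corollary2} and Corollary~\ref{corollary} then give $|\Sigma(T)|\ge 1,3,6,8,14,20$ for $|T|=1,2,3,4,5,6$ respectively, and in each of these six cases one checks directly that
\[
|\Sigma(T)|\ge 6|T|-16 .
\]
Consequently, for \emph{any} decomposition $S=U_1\cdots U_p$ into pairwise disjoint subsets each of length at most $6$, Lemma~\ref{1,t} yields $|\Sigma(S)|\ge\sum_{i=1}^{p}|\Sigma(U_i)|\ge 6|S|-16p$, so everything reduces to exhibiting a decomposition into few size-$\le 6$ blocks.

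Write $h=\mathsf h(S)$. If $h\ge\lceil|S|/6\rceil$, then since every multiplicity is at most $h$ one can split $S$ into exactly $h$ subsets, each of size at most $\lceil|S|/h\rceil\le 6$, by distributing the copies of the elements one at a time into a currently least-loaded admissible class; the estimate above becomes $|\Sigma(S)|\ge 6|S|-16\,\mathsf h(S)$ and, combined with $|\Sigma(S)|\le|G|-1$, finishes the proof. If instead $h<\lceil|S|/6\rceil$, then $h\le\lfloor|S|/6\rfloor=:q$, so one can carve $q$ disjoint size-$6$ subsets out of $S$, leaving a residual block of size $r=|S|-6q\in\{0,\dots,5\}$; here the sharp constants give $|\Sigma(S)|\ge 20q+f(r)$ with $f(r)\in\{0,1,3,6,8,14\}$ the odd-order value for a block of size $r$. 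Feeding in $|\Sigma(S)|\le|G|-1$ together with the hypothesis $|S|\ge\frac{6|G|+26}{20}$ forces $3f(r)\le 10r-16$, which is false for every $r\in\{0,1,2,3,5\}$. Hence the second alternative cannot occur, and the first gives the conclusion.

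I expect the main obstacle to be the residue $r=4$, for which $3f(r)\le 10r-16$ holds with equality. There the whole chain of inequalities must collapse: $|G|=20q+9$, $|\Sigma(S)|=20q+8=|G|-1$, every size-$6$ block attains $|\Sigma|=20$ and the single size-$4$ block attains $|\Sigma|=8$. I would exclude this configuration through the rigidity of the extremal cases: by Lemma~\ref{4} the size-$4$ block must equal $x\cdot(3x)\cdot(4x)\cdot(7x)$ with $\ord(x)=9$, which forces $9\mid|G|=20q+9$; combining this with the requirement $\Sigma(S)=G\setminus\{0\}$ and the very restricted shape of the size-$6$ blocks that meet the minimum $20$, one reaches a contradiction. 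Pinning down exactly this boundary case, and confirming that the extraction of $q$ full size-$6$ blocks plus one residual block of size $r$ is genuinely feasible whenever $\mathsf h(S)\le\lfloor|S|/6\rfloor$ (a Hall/balancing argument), are the only nonroutine points of the argument.
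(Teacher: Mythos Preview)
Your overall scheme is exactly the one the paper has in mind: it simply says ``similar with the proof of Theorem~1.3 in \cite{GLPS}'' and gives no further detail, so your reconstruction---pit $|\Sigma(S)|\le|G|-1$ against a block-by-block lower bound built from Lemma~\ref{1,t}, with the odd-order per-subset minima $1,3,6,8,14,20$ replacing $1,3,5,8,13,19$---is already more explicit than what appears in the paper. The reduction to $|\Sigma(S)|\ge 6|S|-16p$ for any decomposition into $p$ size-$\le 6$ subsets, and the case split on whether $\mathsf h(S)\ge\lceil|S|/6\rceil$, are both correct and standard.

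The gap you flag at $r=4$ is genuine, and your proposed resolution does not close it. You appeal to ``the very restricted shape of the size-$6$ blocks that meet the minimum $20$'', but Corollary~\ref{corollary} supplies only the inequality $|\Sigma(T)|\ge20$; nowhere in this paper is the equality case $|\Sigma(T)|=20$ for odd-order groups classified. The deduction $9\mid|G|$ from Lemma~\ref{4} is valid but merely forces $|G|\in\{9,189,369,\dots\}$, which is not a contradiction. To actually finish you would need either a structure theorem for size-$6$ subsets attaining $|\Sigma|=20$ in odd-order cyclic groups, or an independent argument exploiting the simultaneous equalities (tightness in Lemma~\ref{1,t} together with $\Sigma(S)=G\setminus\{0\}$) to exclude the configuration $|G|=20q+9$, $|S|=6q+4$, $\mathsf h(S)\le q$. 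Neither ingredient is provided here, and whether the corresponding boundary case in \cite{GLPS} already contains the missing piece cannot be read off from the present paper.
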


\bigskip

\noindent {\bf Acknowledgements}.  This work has been supported by  the National Science Foundation of China (Grant No.~11301531) and the Fundamental Research Funds for the Central Universities (No.~3122017075).

\newpage
{\bf APPENDIX}

\begin{center}
{\bf Proof of Lemma \ref{20}}
\end{center}

We just prove the case that $\mathcal{A}_j$ is of form (c7) here. Without loss of generality we assume that $x_1+x_2=x_1+x_3+x_4=x_1+x_5+x_6=x_2+x_3+x_5$. Assume to the contrary that $|\Sigma(S)| \leq 19$. Let

$b_1=x_1=x_3+x_5=x_4+x_6$,

$b_2=x_2=x_3+x_4=x_5+x_6$,

$b_3=x_3$,

$b_4=x_4$,

$b_5=x_5$,

$b_6=x_6$,

$b_7=x_1+x_2=x_1+x_3+x_4=x_1+x_5+x_6=x_2+x_3+x_5=x_2+x_4+x_6$,

$b_8=x_1+x_3=x_2+x_6=x_3+x_4+x_6$,

$b_9=x_1+x_4=x_2+x_5=x_3+x_4+x_5$,

$b_{10}=x_1+x_5=x_2+x_4=x_4+x_5+x_6$,

$b_{11}=x_1+x_6=x_2+x_3=x_3+x_5+x_6$,

$b_{12}=x_1+x_2+x_3=x_1+x_3+x_5+x_6=x_2+x_3+x_4+x_6$,

$b_{13}=x_1+x_2+x_4=x_1+x_4+x_5+x_6=x_2+x_3+x_4+x_5$,

$b_{14}=x_1+x_2+x_5=x_1+x_3+x_4+x_5=x_2+x_4+x_5+x_6$,

$b_{15}=x_1+x_2+x_6=x_1+x_3+x_4+x_6=x_2+x_3+x_5+x_6$,

$b_{16}=x_1+x_3+x_5=x_2+x_4+x_6=x_2+x_3+x_4=x_2+x_5+x_6=x_3+x_4+x_5+x_6$,

$b_{17}=x_1+x_2+x_3+x_4=x_1+x_2+x_5+x_6=x_1+x_3+x_4+x_5+x_6$,

$b_{18}=x_1+x_2+x_3+x_5=x_1+x_2+x_4+x_6=x_2+x_3+x_4+x_5+x_6$,

$b_{19}=x_1+x_2+x_3+x_4+x_5+x_6$.

A straightforward computation shows that $b_1, b_2, \ldots , b_{19}$ are pairwise distinct. Then
\begin{equation*}
B=\{b_i \mid 1\le i \le 19 \} = \Sigma(S),
\end{equation*}
and hence $b_{20}=x_3+x_6 \in \Sigma(S)$. Since $S$ is zero-sum free and $S$ contains no elements of order 2, we infer that $b_{20}\in \{b_4,b_5,b_{9},b_{10},b_{13},b_{14}\}$.

If $b_{20}\in \{b_4,b_5\}$, it is easy to verify that $b_{21}=x_1+x_2+x_4+x_5 \not\in B$, yielding a contradiction. If $b_{20}\in \{b_{9},b_{10}\}$, then $b_{22}=x_1+x_2+x_3+x_6 \not\in B$, yielding a contradiction. If $b_{20}\in \{b_{13},b_{14}\}$, then $b_{23}=x_4+x_5 \not\in B$,  yielding a contradiction too.
\qed

\bigskip
\begin{center}
{\bf Proof of Lemma \ref{19}}
\end{center}

Assume that $|\Sigma(S)|= 19$. We will show the lemma for the cases $\mathcal{A}_j \in  \{\mbox{b1, b4, c1, c2, ..., c6}\}.$

{\bf Proof of the case that $\mathcal{A}_j$ is of form (b1).} Without loss of generality we assume that $x_1=x_2+x_3+x_4+x_5=x_2+x_6=x_3+x_4+x_6$. Let

$b_1=x_1=x_2+x_3+x_4+x_5=x_2+x_6=x_3+x_4+x_6$,

$b_2=x_2=x_3+x_4$,

$b_3=x_3$,

$b_4=x_6=x_2+x_5=x_3+x_4+x_5$,

$b_5=x_1+x_2=x_1+x_3+x_4=x_2+x_3+x_4+x_6$,

$b_{6}=x_1+x_3=x_2+x_3+x_6$,

$b_7=x_1+x_5=x_2+x_5+x_6=x_3+x_4+x_5+x_6$,

$b_{8}=x_1+x_6=x_1+x_2+x_5=x_1+x_3+x_4+x_5=x_2+x_3+x_4+x_5+x_6$,

$b_9=x_3+x_6=x_2+x_3+x_5$,

$b_{10}=x_1+x_2+x_6=x_1+x_3+x_4+x_6=x_1+x_2+x_3+x_4+x_5$,

$b_{11}=x_1+x_3+x_5=x_2+x_3+x_5+x_6$,

$b_{12}=x_1+x_3+x_6=x_1+x_2+x_3+x_5$,

$b_{13}=x_1+x_2+x_5+x_6=x_1+x_3+x_4+x_5+x_6$,

$b_{14}=x_1+x_2+x_3+x_5+x_6$,

$b_{15}=x_1+x_2+x_3+x_4+x_5+x_6$.

Since $S$ is a zero-sum free subset and $S$ contains no elements of order~2, by Lemma~\ref{lemma T} we infer that $b_1,b_2,\ldots,b_{15}$ are pairwise distinct. Let $B=\{b_1,b_2,\ldots,b_{15}\}$. Note that $$b_{16}=x_5 \notin B\setminus \{b_5, b_{6}\}.$$ We distinguish three cases.

{\bf Case 1.} $b_{16}=b_5$. That is $x_5=x_1+x_2=x_2+x_3+x_4+x_6=x_1+x_3+x_4$. Let $b_{17}=x_3+x_5$, $b_{18}=x_3+x_5+x_6$, $b_{19}=x_1+x_3+x_5+x_6$, $b_{20}=x_2+x_3$, $b_{21}=x_2+x_3+x_4$, $b_{22}=x_1+x_5+x_6$, then
$(B\cup \{b_{17}, b_{18}, b_{19}, b_{20}, b_{21}, b_{22}\})\setminus\{b_{12}\}$ is a set of 20 distinct elements,
yielding a contradiction.

{\bf Case 2.} $b_{16}=b_6$. That is $x_5=x_1+x_3=x_2+x_3+x_6$. Then $B\cup\{b_{17}, b_{20}, b_{22}\}$ is a set of 18 distinct elements. Let $N=B\cup\{b_{17}, b_{20}, b_{22}\}$, then $b_{23}=x_1+x_4+x_5\notin N\setminus \{b_{17}\}$.

If $b_{23}=b_{17}$, let $b_{24}=x_2+x_4+x_5$, $b_{25}=x_1+x_4+x_5+x_6$. Then $N\cup\{b_{24}, b_{25}\}$ is a set of 20 distinct elements, yielding a contradiction.

If $b_{23}\neq b_{17}$, then $b_{26}=x_1+x_4\notin (N\cup\{b_{23}\})\setminus \{b_{11}, b_{12}\}$. If $b_{26}=b_{11}$, then $b_{27}=x_4+x_5\notin N\cup\{b_{23}\}$, yielding a contradiction. If $b_{26}=b_{12}$,
then $b_{25}\notin (N\cup\{b_{23}\})\setminus \{b_3\}$. If $b_{25}\neq b_3$, then $b_{25}\notin N\cup\{b_{23}\}$, yielding a contradiction. If $b_{25}=b_3$, then $x_1=6x_2$,
$x_3=-2x_2$, $x_4=3x_2$, $x_5=4x_2$, $x_6=5x_2$ and $\ord(x_2)=20$. Let $g=x_2$, then $S$ is of form (a3). If $b_{26}\neq b_{11}, b_{12}$, then $b_{26}\notin N\cup\{b_{23}\}$, yielding a contradiction.

{\bf Case 3.} $b_{16}\neq b_5, b_6$. Then $b_{16}\notin B$, $b_{17}\notin (B\cup\{b_{16}\})\setminus\{b_5\}$.

{\bf Subcase 3.1.} $b_{17}=b_5$, that is $x_3+x_5=x_1+x_2=x_2+x_3+x_4+x_6=x_1+x_3+x_4$, then $B\cup\{b_{16}, b_{19}, b_{21}\}$ is a set of 18 distinct elements. Let $E=B\cup\{b_{16}, b_{19}, b_{21}\}$, then $b_{30}=x_3+x_6\notin E\setminus\{b_3, b_6\}$.

If $b_{30}\neq b_3, b_6$, then $b_{30}\notin E$, $b_{31}=x_1+x_2+x_3\notin E$. If $b_{31}\neq b_{30}$, then $b_{31}\notin E\cup\{b_{30}\}$, yielding a contradiction. If $b_{31}=b_{30}$, then $b_{20}\notin E\cup\{b_{30}\}$, yielding a contradiction.

If $b_{30}=b_3$, then $b_{28}=x_2+x_4\notin E\setminus \{b_{14}\}$. If $b_{28}=b_{14}$, then $b_{27}=x_4+x_5\notin E$, $b_{29}=x_4+x_6\notin E\cup\{b_{27}\}$, yielding a contradiction. If $b_{28}\neq b_{14}$,
then $b_{29}\notin (E\cup\{b_{28}\})\setminus\{b_{14}\}$. If $b_{29}\neq b_{14}$, then $b_{29}\notin E\cup\{b_{28}\}$, yielding a contradiction. If $b_{29}=b_{14}$, then $S=x_2\cdot x_5\cdot (x_2+x_5)\cdot (x_2+2x_5)\cdot (2x_2+x_5)\cdot
(4x_2+4x_5)$ and $2x_2=2x_5$, $\ord(x_2)=\ord(x_5)=10$. Let $x_2=g, x_5=h$, then $S$ is of form (a5).

If $b_{30}=b_6$, then $B\cup\{b_{16}, b_{19}, b_{22}, b_{27}\}$ is a set of 19 distinct elements. Let $F=B\cup\{b_{16}, b_{19}, b_{22}, b_{27}\}$, then $b_{28}\notin F\setminus\{b_{14}\}$. If $b_{28}\neq b_{14}$, then $b_{28}\notin F$, yielding a contradiction. If $b_{28}=b_{14}$, then $b_{29}\notin F\setminus\{b_{3}\}$. If $b_{29}\neq b_{3}$, then $b_{29}\notin F$, yielding a contradiction. If $b_{29}=b_3$, then
$x_1=6x_2$, $x_3=3x_2$, $x_4=-2x_2$, $x_5=4x_2$, $x_6=5x_2$ and $\ord(x_2)=20$. Let $x_2=g$, then $S$ is of form (a3).

{\bf Subcase 3.2.} $b_{17}\neq b_5$, then $b_{17}\notin B\cup\{b_{16}\}$, $b_{19}\notin (B\cup\{b_{16}, b_{17}\})\setminus\{b_2, b_5\}$.

If $b_{19}\neq b_2, b_5$, then $B\cup\{b_{16}, b_{17}, b_{19}\}$ is a set of 18 distinct elements. Let $G=B\cup\{b_{16}, b_{17}, b_{19}\}$, then $b_{18}\notin G\setminus\{b_5\}$. If $b_{18}\neq b_5$, then $b_{22}\notin
(G\cup\{b_{18}\})\setminus\{b_2, b_6\}$. If $b_{22}\neq b_2, b_6$, then $b_{22}\notin G\cup\{b_{18}\}$, yielding a contradiction. If $b_{22}=b_2$, then $b_{30}\notin G\cup\{b_{18}\}$, yielding a contradiction. If $b_{22}=b_6$, then $b_{34}=x_1+x_2+x_3+x_6\notin G\cup\{b_{18}\}$, yielding a contradiction.
If $b_{18}=b_5$, then $b_{22}\notin G\setminus\{b_6\}$. If $b_{22}\neq b_6$, then $b_{22}\notin G$, $b_{26}\notin G\cup\{b_{22}\}$, yielding a contradiction. If $b_{22}=b_6$, then $G\cup\{b_{29}, b_{34}\}$ is a set of 20 distinct
elements, yielding a contradiction.

If $b_{19}=b_2$, then $B\cup\{b_{16}, b_{17}, b_{18}\}$ is a set of 18 distinct elements. Let $H=B\cup\{b_{16}, b_{17}, b_{18}\}$, then $b_{26}\notin H\setminus\{b_3\}$. If $b_{26}\neq b_3$, then $b_{26}\notin H$, $b_{22}\notin (H\cup\{b_{26}\})\setminus\{b_6\}$. If $b_{22}\neq b_6$, then $b_{22}\notin H\cup\{b_{26}\}$, yielding a contradiction. If $b_{22}=b_6$, then $b_{32}=x_1+x_4+x_6\notin H\cup\{b_{26}\}$, yielding a contradiction. If $b_{26}=b_3$, then $b_{29}\notin H$, $b_{22}\notin H\cup\{b_{29}\}$, yielding a contradiction.

If $b_{19}=b_5$, then $B\cup\{b_{16}, b_{17}, b_{26}\}$ is a set of 18 distinct elements. Let $I=B\cup\{b_{16}, b_{17}, b_{26}\}$, then $b_{30}\notin H\setminus\{b_6\}$. If $b_{30}\neq b_6$, then $b_{33}=x_1+x_2+x_4+x_6\notin (I\cup\{b_{30}\})\setminus\{b_3\}$. If $b_{33}\neq b_3$, then $b_{33}\notin I\cup\{b_{30}\}$, yielding a contradiction. If $b_{33}=b_3$, then $b_{20}\notin I\cup\{b_{30}\}$, yielding a contradiction. If $b_{30}=b_6$, then $b_{32}\notin I$, $b_{25}\notin I\cup\{b_{32}\}$, yielding a contradiction.

{\bf The prove of the case that $\mathcal{A}_j$ is of form (b4).} Without loss of generality we assume that $x_1=x_2+x_3+x_4=x_2+x_5+x_6=x_3+x_5$. Let

$c_1=x_1=x_2+x_3+x_4=x_2+x_5+x_6=x_3+x_5$,

$c_2=x_2$,

$c_3=x_3=x_2+x_6$,

$c_4=x_5=x_2+x_4$,

$c_5=x_1+x_2=x_2+x_3+x_5$,

$c_{6}=x_1+x_3=x_1+x_2+x_6=x_2+x_3+x_5+x_6$,

$c_7=x_1+x_4=x_3+x_4+x_5=x_2+x_4+x_5+x_6$,

$c_{8}=x_1+x_5=x_1+x_2+x_4=x_2+x_3+x_4+x_5$,

$c_9=x_1+x_6=x_3+x_5+x_6=x_2+x_3+x_4+x_6$,

$c_{10}=x_2+x_3$,

$c_{11}=x_2+x_5$,

$c_{12}=x_3+x_4=x_5+x_6=x_2+x_4+x_6$,

$c_{13}=x_1+x_3+x_4=x_1+x_5+x_6=x_1+x_2+x_4+x_6=x_2+x_3+x_4+x_5+x_6$,

$c_{14}=x_1+x_3+x_5=x_1+x_2+x_3+x_4=x_1+x_2+x_5+x_6$,

$c_{15}=x_1+x_2+x_3+x_4+x_5+x_6$.

Since $S$ is a zero-sum free subset and $S$ contains no elements of order 2, by Lemma~\ref{lemma T} we infer that $c_1,c_2,\ldots,c_{15}$ are pairwise distinct. Let $C=\{c_1,c_2,\ldots,c_{15}\}$. Note that $$c_{16}=x_1+x_2+x_3+x_5+x_6 \notin C\setminus \{c_7\}.$$ We distinguish two cases.

{\bf Case 1.} $c_{16}=c_7$. That is $x_1+x_2+x_3+x_5+x_6=x_1+x_4=x_3+x_4+x_5=x_2+x_4+x_5+x_6$. Then $c_{17}=x_2+x_3+x_6\notin C\setminus\{c_{11}\}$.

{\bf Subcase 1.1.} $c_{17}=c_{11}$. Let $c_{18}=x_1+x_4+x_6$, $c_{19}=x_4+x_5+x_6$,
$c_{20}=x_1+x_4+x_5$, $c_{21}=x_1+x_3+x_4+x_5$. Then $C\cup\{c_{18}, c_{19}, c_{20}, c_{21}\}$ is a set of 19 distinct elements. Let $J=C\cup\{c_{18}, c_{19}, c_{20}, c_{21}\}$. Then $c_{22}=x_1+x_4+x_5+x_6\notin J\setminus\{c_2\}$.
If $c_{22}\neq c_2$, then $c_{22}\notin J$, yielding a contradiction. If $c_{22}=c_2$, then $x_1=5x_3$, $x_2=-2x_3$,
$x_4=6x_3$, $x_5=4x_3$, $x_6=3x_3$ and $\ord(x_3)=20$. Let $g=x_3$, then $S$ is of form (a3).

{\bf Subcase 1.2.} $c_{17}\neq c_{11}$. Then $c_{17}\notin C$, $c_{23}=x_2+x_4+x_5\notin C$.

If $c_{23}\neq c_{17}$, then $c_{23}\notin C\cup\{c_{17}\}$, $c_{24}=x_1+x_2+x_3+x_4+x_5\notin C\cup\{c_{23}\}$.
If $c_{24}\neq c_{17}$, then $c_{24}\notin C\cup\{c_{17}, c_{23}\}$, $c_{19}\notin (C\cup\{c_{17}, c_{23}, c_{24}\})\setminus\{c_{11}\}$. If $c_{19}\neq c_{11}$, then $c_{19}\notin C\cup\{c_{17}, c_{23}, c_{24}\}$. Let
$K=C\cup\{c_{17}, c_{19}, c_{23}, c_{24}\}$, then $c_{20}\notin K\setminus\{c_2\}$. If $c_{20}\neq c_2$, then $c_{20}\notin K$, yielding a contradiction. If $c_{20}= c_2$, then $c_{22}\notin K$, yielding a contradiction.
If $c_{19}=c_{11}$, then $c_{25}=x_1+x_2+x_5\notin C\cup\{c_{17}, c_{23}, c_{24}\}$, $c_{20}\notin C\cup\{c_{17}, c_{23}, c_{24}, c_{25}\}$, yielding a contradiction. If $c_{24}= c_{17}$, then $C\cup\{c_{17}, c_{20}, c_{21}, c_{23}, c_{25}\}$ is a set of 20 distinct elements, yielding a contradiction.

If $c_{23}= c_{17}$, that is $x_2+x_4+x_5=x_2+x_3+x_6$. Let $c_{26}=x_4+x_6$, $c_{27}=x_3+x_4+x_6$. Then $C\cup\{c_{17}, c_{19}, c_{26}, c_{27}\}$ is a set of 19 distinct elements. Let $L=C\cup\{c_{17}, c_{19}, c_{26}, c_{27}\}$, then $c_{18}\notin L\setminus\{c_2\}$.
If $c_{18}\neq c_2$, then $c_{18}\notin L$, yielding a contradiction. If $c_{18}= c_2$, then $S=x_3\cdot x_5\cdot (x_3+x_5)\cdot (x_3+2x_5)\cdot (2x_3+x_5)\cdot (4x_3+4x_5)$ where  $2x_3=2x_5$ and $\ord(x_3)=\ord(x_5)=10$. Let $g=x_3$, $h=x_5$, then $S$ is of form (a5).

{\bf Case 2.} $c_{16}\neq c_7$. Then $c_{16}\notin C$, $c_{28}=x_6\notin (C\cup\{c_{6}\})\setminus\{c_5, c_8, c_{11}\}$.

{\bf Subcase 2.1.} $c_{28}=c_5$. That is $x_6=x_1+x_2=x_2+x_3+x_5$. Let $c_{29}=x_1+x_3+x_5+x_6$, $c_{30}=x_1+x_3+x_4+x_6$. Then $C\cup\{c_{16}, c_{18}, c_{21}, c_{29}, c_{30}\}$ is a set of 20 distinct elements,
yielding a contradiction.

{\bf Subcase 2.2.} $c_{28}= c_8$. Then $c_{31}=x_1+x_3+x_6\notin C\cup\{c_{16}\}$, $c_{29}\notin C\cup\{c_{16}, c_{31}\}$, $c_{17}\notin (C\cup\{c_{16}, c_{29}, c_{31}\})\setminus\{c_7\}$.

If $c_{17}\neq c_7$, then $c_{17}\notin C\cup\{c_{16}, c_{29}, c_{31}\}$. Let $P=C\cup\{c_{16}, c_{17}, c_{29}, c_{31}\}$, then $c_{32}=x_1+x_2+x_3\notin P\setminus\{c_7\}$. If $c_{32}\neq c_7$, then $c_{32}\notin P$, yielding a contradiction. If $c_{32}= c_7$, then $c_{23}\notin P$, yielding a contradiction.

If $c_{17}= c_7$, let $c_{33}=x_1+x_2+x_3+x_6$, then $c_{16}\notin C\cup\{c_{29}, c_{31}, c_{33}\}$, $c_{18}\notin C\cup\{c_{29}, c_{31}, c_{33}\}$. If $c_{16}\neq c_{18}$, then $C\cup\{c_{16}, c_{18}, c_{29}, c_{31}, c_{33}\}$ is a set of 20 distinct elements, yielding a contradiction. If $c_{16}= c_{18}$, let $Q=C\cup\{c_{16}, c_{29}, c_{31}, c_{33}\}$, then $c_{30}\notin Q\setminus\{c_2\}$. If $c_{30}\neq c_2$, then $c_{30}\notin Q$, yielding a contradiction.
If $c_{30}=c_2$, then $x_1=5x_5$, $x_2=-2x_5$, $x_3=4x_5$, $x_4=3x_5$, $x_6=6x_5$ and $\ord(x_5)=20$. Let $g=x_5$, then
$S$ is of form (a3).

{\bf Subcase 2.3.} $c_{28}= c_{11}$. Then $C\cup\{c_{16}, c_{29}, c_{30}\}$ is a set of 18 distinct elements. Let $R=C\cup\{c_{16}, c_{29}, c_{30}\}$, then $c_{31}\notin R\setminus\{c_7\}$.

If $c_{31}\neq c_7$, then $c_{31}\notin R$, $c_{32}\notin (R\cup\{c_{31}\})\setminus\{c_7\}$. If $c_{32}\neq c_7$,
then $c_{32}\notin R\cup\{c_{31}\}$, yielding a contradiction. If $c_{32}=c_7$,
then $c_{26}\notin R\cup\{c_{31}\}$, yielding a contradiction. If $c_{31}=c_7$, then $R\cup\{c_{21}, c_{22}\}$ is a set
of 20 distinct elements.

{\bf Subcase 2.4.} $c_{28}\neq c_5, c_8, c_{11}$. Then $C\cup\{c_{16}, c_{24}, c_{25}\}$ is a set of 18 distinct elements. Let $T=C\cup\{c_{16}, c_{24}, c_{25}\}$, then $c_{32}\notin T\setminus\{c_7, c_{12}\}$.

If $c_{32}= c_7$, then $T\cup\{c_{20}, c_{21}\}$ is a set of 20 distinct elements, yielding a contradiction. If $c_{32}= c_{12}$, then $T\cup\{c_{21}, c_{22}\}$ is a set of 20 distinct elements, yielding a contradiction. If $c_{32}\neq c_7, c_{12}$,
then $c_{32}\notin T$, $c_{34}=x_1+x_2+x_3+x_5\notin (T\cup\{c_{32}\})\setminus\{c_{12}\}$. If $c_{34}\neq c_{12}$,
then $c_{34}\notin T\cup\{c_{32}\}$, yielding a contradiction. If $c_{34}= c_{12}$,
then $c_{18}\notin T\cup\{c_{32}\}$, yielding a contradiction too.

{\bf The proof of the case that $\mathcal{A}_j$ is of form (c1).} Without loss of generality we assume that $x_1+x_2=x_3+x_4=x_1+x_3+x_5+x_6=x_2+x_4+x_5+x_6=x_1+x_4+x_5$. Let

$d_1=x_1=x_2+x_6=x_4+x_5+x_6$,

$d_2=x_2=x_4+x_5=x_3+x_5+x_6$,

$d_3=x_3=x_1+x_5=x_2+x_5+x_6$,

$d_4=x_4=x_3+x_6=x_1+x_5+x_6$,

$d_5=x_1+x_2=x_3+x_4=x_1+x_3+x_5+x_6=x_2+x_4+x_5+x_6=x_1+x_4+x_5$,

$d_{6}=x_1+x_3=x_2+x_4=x_1+x_2+x_5+x_6=x_3+x_4+x_5+x_6=x_2+x_3+x_6$,

$d_7=x_1+x_4=x_1+x_3+x_6=x_2+x_4+x_6$,

$d_{8}=x_2+x_3=x_1+x_2+x_5=x_3+x_4+x_5$,

$d_9=x_5+x_6$,

$d_{10}=x_1+x_2+x_3=x_1+x_3+x_4+x_5=x_2+x_3+x_4+x_5+x_6$,

$d_{11}=x_1+x_2+x_4=x_1+x_2+x_3+x_6=x_1+x_3+x_4+x_5+x_6$,

$d_{12}=x_1+x_2+x_6=x_3+x_4+x_6=x_1+x_4+x_5+x_6$,

$d_{13}=x_1+x_3+x_4=x_2+x_3+x_4+x_6=x_1+x_2+x_4+x_5+x_6$,

$d_{14}=x_1+x_3+x_5=x_2+x_4+x_5=x_2+x_3+x_5+x_6$,

$d_{15}=x_2+x_3+x_4=x_1+x_2+x_4+x_5=x_1+x_2+x_3+x_5+x_6$,

$d_{16}=x_1+x_2+x_3+x_4$,

$d_{17}=x_1+x_2+x_3+x_4+x_5+x_6$.

Since $S$ is a zero-sum free subset and $S$ contains no elements of order 2, by Lemma~\ref{lemma T} we infer that $d_1,d_2,\ldots,d_{17}$ are pairwise distinct. Let $D=\{d_1,d_2,\ldots,d_{17}\}$. Note that $$d_{18}=x_5 \notin D\setminus \{d_7, d_{16}\}.$$ We distinguish three cases.

{\bf Case 1.} $d_{18}=d_7$. That is $x_5=x_1+x_4=x_1+x_3+x_6=x_2+x_4+x_6$. Then $d_{19}=x_1+x_6\notin D$, $d_{20}=x_4+x_6\notin D\cup\{d_{19}\}$. Let $X=D\cup\{d_{19}, d_{20}\}$, then $d_{21}=x_2+x_3+x_4+x_5\notin X\setminus\{d_{20}\}$. If $d_{21}\neq d_{20}$, then $d_{21}\notin X$, yielding a contradiction. If $d_{21}=d_{20}$,
then $S=x_1\cdot x_4\cdot (x_1+x_4)\cdot (x_1+2x_4)\cdot (2x_1+x_4)\cdot (4x_1+4x_4)$ where $2x_1=2x_4$ and $\ord(x_1)=\ord(x_4)=10$. Let $g=x_1$, $h=x_4$, then $S$ is of form (a5).

{\bf Case 2.} $d_{18}=d_{16}$. That is $x_5=x_1+x_2+x_3+x_4$. Then $d_{22}=x_2+x_5\notin D$, $d_{23}=x_3+x_5\notin D\cup\{d_{22}\}$. Let $Y=D\cup\{d_{22}, d_{23}\}$, then $d_{24}=x_1+x_3+x_4+x_6\notin Y\setminus\{d_{23}\}$. If $d_{24}\neq d_{23}$, then $d_{24}\notin Y$, yielding a contradiction. If $d_{24}=d_{23}$,
then $S=x_2\cdot x_3\cdot (x_2+x_3)\cdot (x_2+2x_3)\cdot (2x_2+x_3)\cdot (4x_2+4x_3)$ where $2x_2=2x_3$ and  $\ord(x_2)=\ord(x_3)=10$. Let $g=x_2$, $h=x_3$, then $S$ is of form (a5).

{\bf Case 3.} $d_{18}\neq d_7, d_{16}$. Then $d_{18}\notin D$, $d_{25}=x_1+x_2+x_4+x_6\notin (D\cup\{d_{18}\})\setminus\{d_8\}$. If $d_{25}=d_8$, then $d_{19}\notin D\cup\{d_{18}\}$, $d_{26}=x_1+x_2+x_3+x_4+x_6\notin D\cup\{d_{18}, d_{19}\}$, yielding a contradiction. If $d_{25}\neq d_8$, then
$d_{26}\notin D\cup\{d_{25}\}$, $d_{24}\notin (D\cup\{d_{25}, d_{26}\})\setminus\{d_8\}$. If $d_{24}\neq d_8$, then
$d_{24}\notin D\cup\{d_{25}, d_{26}\}$, yielding a contradiction. If $d_{24}= d_8$, then
$d_{18}\notin D\cup\{d_{25}, d_{26}\}$, yielding a contradiction too.

If $\mathcal{A}_j$ has one of the forms (c2), (c3), (c4), (c5) and(c6). Then we have one of the following holds correspondingly:
\begin{equation*}
x_{\tau(2)}=x_{\tau(3)}+x_{\tau(5)}+x_{\tau(6)}=x_{\tau(4)}+x_{\tau(5)}=x_{\tau(1)}+x_{\tau(3)},
\end{equation*}
\begin{equation*}
x_{\tau(3)}=x_{\tau(4)}+x_{\tau(5)}+x_{\tau(6)}=x_{\tau(1)}+x_{\tau(5)}=x_{\tau(2)}+x_{\tau(6)},
\end{equation*}
\begin{equation*}
x_{\tau(1)}=x_{\tau(2)}+x_{\tau(5)}+x_{\tau(6)}=x_{\tau(3)}+x_{\tau(5)}=x_{\tau(4)}+x_{\tau(6)},
\end{equation*}
\begin{equation*}
x_{\tau(2)}=x_{\tau(3)}+x_{\tau(5)}+x_{\tau(6)}=x_{\tau(1)}+x_{\tau(5)}=x_{\tau(3)}+x_{\tau(4)},
\end{equation*}
\begin{equation*}
x_{\tau(2)}=x_{\tau(3)}+x_{\tau(5)}+x_{\tau(6)}=x_{\tau(1)}+x_{\tau(5)}=x_{\tau(3)}+x_{\tau(4)}.
\end{equation*}
It reduces to the case that $\mathcal{A}_j$  of form (b3). This completes the proof.
\qed


\begin{thebibliography}{10}

\bibitem{BHS} G.~Bhowmik, I.~Halupczok, and J.C.~Schlage-Puchta, Zero-sum free sequences with
small sum-set, Mathematics of Computation. \textbf{80} (2011): 2253-2258.



\bibitem{EE1972} R.B.~Eggleton and P.~Erd\"{o}s, Two combinatorial problems in group theory, Acta Arith. \textbf{21} (1972), 111-116.

\bibitem{GG1998} W.~Gao and A.~Geroldinger, On the structure of zerofree sequences, Combinatorica \textbf{18} (1998), 519-527.

\bibitem{GG2006} W.~Gao and A.~Geroldinger, Zero-sum problems in finite abelian groups: a survey, Expo. Math. \textbf{24} (2006), 337-369.

\bibitem{GLPS} W.~Gao, Y.~Li, J.~Peng, and F.~Sun, Subsums of a Zero-sum Free Subset of an Abelian Group, The Electronic Journal of Combinatorics. \textbf{15} (2008), R116.

\bibitem{Ge09a} A.~Geroldinger, Additive group theory and non-unique factorizations, Combinatorial Number Theory and Additive Group Theory (A.~Geroldinger and I.~Ruzsa, eds.), Advanced Courses in Mathematics CRM Barcelona, Birkh{\"a}user, 2009, pp.~1-86.

\bibitem{GH2006} A.~Geroldinger and F.~Halter-Koch, Non-Unique Factorizations. Algebraic, Combinatorial and Analytic Theory, Pure and Applied Mathematics, 700p, vol. 278, Chapman \& Hall/CRC, 2006.

\bibitem{GH2002} A.~Geroldinger and Y.O.~Hamidoune, Zero-sumfree sequences in cyclic groups and some arithmetical application, J. Th\'{e}or. Nombres Bordx. \textbf{14} (2002), 221-239.

\bibitem{GZY} H.~Guan, X.~Zeng, and P.~Yuan, Description of invariant $F(5)$ of a zero-sum free sequence, Acta Scientiarum Naturalium Universitaties Sunyatseni \textbf{49} (2010), 1-4.

\bibitem{O} J.E.~Olson, Sums of sets of group elements, Acta Arith. \textbf{28} (1975), 147-156.

\bibitem{SC2007} S.~Savchev and F.~Chen, Long zero-free sequences in finite cyclic groups, Discrete Math. \textbf{307} (2007), 2671-2679.


\bibitem{Yuan2007} P.~Yuan, On the indexes of minimal zero-sum sequences over finite cyclic groups, J. Combin. Theory Ser. A, \textbf{114} (2007) 1545-1551.

\bibitem{YZ2010} P.~Yuan and X. Zeng, On zero-sum free subsets of length 7, The Electronic Journal of Combinatorics 17 (2010), R104.

\bibitem{ZYL2015} X.~Zeng, P.~Yuan, and Y.~Li, On the structure of long unsplittable minimal zero-sum sequences. Acta Arithmetica, \textbf{176} (2016), 131-159.

\end{thebibliography}
\end{document}